\documentclass[12pt]{amsart}
\linespread{1}
\usepackage {amssymb,latexsym}
\usepackage [all]{xy}
\usepackage{nohyperref}
\usepackage[pageref]{backref}

\oddsidemargin=-.05in

\evensidemargin=-.05in

\textwidth=6.4in

\topmargin=-.1in

\textheight=9.0in

\parindent=0.25in


\newtheorem{theorem}{Theorem}[section]
\newtheorem{lemma}[theorem]{Lemma}
\newtheorem {proposition}[theorem]{Proposition}
\newtheorem {corollary} [theorem] {Corollary}

\theoremstyle{definition}
\newtheorem{definition}[theorem]{Definition}
\newtheorem{example}[theorem]{Example}

\newtheorem{remark}[theorem]{Remark}
\newtheorem{assumption}[theorem]{Assumption}
\numberwithin {equation}{section}

\renewcommand*{\backref}[1]{}
\renewcommand*{\backrefalt}[4]{%
    \ifcase #1 (Not cited.)%
    \or        (Cited on page~#2.)%
    \else      (Cited on pages~#2.)%
    \fi
    }

\def\cs{{$C^{\ast}$}}

\def\q{{\mathbb{Q}}}
\def\z{{\mathbb{Z}}}
\def\n{{\mathbb{N}}}
\def\c{{\mathbb{C}}}
\def\r{{\mathbb{R}}}

\def\csr{{C^{\ast}_r}}

\def\H{{\mathcal{H}}}

\def\gh{{(G,H)}}
\def\ghr{{(G_r,H_r)}}

\def\gb{{\overline{G}}}
\def\hb{{\overline{H}}}
\def\ghb{{(\gb,\hb)}}

\def\la{{\lambda}}

\def\th{{\theta}}
\def\ff{{\varphi}}
\def\d{{\delta}}
\def\D{{\Delta}}

\def\ba{{\backslash}}
\def\inv{{^{-1}}}
\def\s{{^{\ast}}}
\def\st{{^{\circledast}}}
\def\hg{{H\backslash G}}

\begin{document}
\title{Locally compact Hecke pairs}

\author{Vahid Shirbisheh}
\email{shirbisheh@gmail.com}


\keywords{Locally compact Hecke pairs, Hecke \cs-algebras, unimodularity, totally disconnected locally compact groups, the Schlichting completion, compact subgroups, cocompact subgroups.}

\begin{abstract}
    We introduce an extended setting to study pairs $\gh$ of locally compact groups which admit a regular representation on $L^2(\hg)$ and therefore a \cs-algebra, called Hecke \cs-algebra. These pairs are called locally compact Hecke pairs and mainly consist of locally compact groups and their compact subgroups, or cocompact subgroups, or open Hecke subgroups. We clarify relationship of our generalized formulation with the discrete case and obtain new results for discrete Hecke pairs too. For instance, we show that the left regular representations associated to discrete Hecke pairs are bounded homomorphisms. We also study the unimodularity of discrete Hecke pairs and introduce a condition, significantly weaker than commutativity, implying unimodularity.
\end{abstract}
\maketitle

\section {Introduction}
\label{sec:INTRO}
Hecke \cs-algebras were introduced by Jean-Beno\^{\i}t Bost and Alain Connes in \cite{bc}, in order to study the class field theory of the field $\q$ of rational numbers by means of quantum statistical mechanics. Since then, many examples of Hecke pairs $\gh$ have appeared in the related literature. In these Hecke pairs either $H$ is a subgroup of a discrete group $G$ or $H$ is a compact open subgroup of a locally compact group $G$. In both cases the necessary and sufficient condition to define a regular representation and subsequently to associate a \cs-algebra to the pair $\gh$ is that $H$ should be commensurable with all its conjugates. In this setting the homogeneous space $\hg$ associated to the pair $\gh$ is always a discrete space, and so we refer to such pairs as discrete Hecke pairs. The discreteness of the homogeneous space $\hg$ facilitates many constructions and results. However, this setting misses many interesting pairs of locally compact groups and their subgroups which have been studied in other fields, for instance $(SL_2(\r), SO_2(\r))$ and $(SL_2(\r), SL_2(\z))$. We shall see that these pairs also admit  \cs-algebraic representations similar to the Hecke \cs-algebras previously defined in the literature. On the other hand, it was observed in \cite{tzanev, klq} that the theory of locally compact groups provides a convenient setting for studying discrete Hecke pairs and their associated Hecke \cs-algebras. Another motivation for studying locally compact Hecke pairs comes from our results in the present paper, and in our another paper \cite{s4} where we apply our generalized setting based on locally compact groups to study length functions on Hecke pairs and property (RD) for locally compact Hecke pairs. Therefore our main aim in this paper is to extend the definition of Hecke \cs-algebras for several classes of pairs $\gh$ of locally compact groups whose homogeneous spaces $\hg$ are not necessarily discrete.

In this paper, a pair $\gh$ consisting of a locally compact group $G$ and a closed subgroup $H$ of $G$ is simply called a {\it pair}. We carefully examine various steps of associating a \cs-algebra to a ``suitable'' pair. Our primary concern is to determine classes of pairs such that for every pair $\gh$ in them, there exists a left regular representation from the Hecke algebra $\H\gh$ associated to $\gh$ into the \cs-algebra of bounded operators on the Hilbert space $L^2(\hg)$. Besides discrete Hecke pairs, we show that there are at least two other important classes of pairs which admit such a construction, and therefore their Hecke algebras are suitable for \cs-algebraic formulation. The first class consists of pairs $\gh$ in which $H$ is a compact subgroup of $G$. In this case, not only we prove the existence of a well defined left regular representation, but we also show that it is a bounded homomorphism, see Theorem \ref{thm:Hcompact}. Although this latter statement is not a necessary step to define Hecke \cs-algebra, it is a useful feature in application, due to the parallelism between reduced Hecke \cs-algebras and reduced group \cs-algebras. Furthermore, by applying a certain topologization process of discrete Hecke pairs named the Schlichting completion, this statement is applied to prove the same boundedness statement for the left regular representations associated with discrete Hecke pairs. This is particularly important in the study of property (RD) for Hecke pairs, for instance it is applied to show that every locally compact Hecke pair with polynomial growth has property (RD), see Section 4 of \cite{s4} for details. The second class of locally compact Hecke pairs capable of a \cs-algebraic representation consists of pairs $\gh$ in which $H$ is a cocompact subgroup of $G$, see Theorem \ref{thm:cocompact}.

To every discrete Hecke pair $\gh$, a group homomorphism $\D_\gh:G\to \q^+$ is associated, which we name it ``the relative modular function''. In a sense, it is a generalization of the modular function of a locally compact group. Therefore we define the involution on the Hecke algebra associated with $\gh$ using its relative modular function. Then it follows that the associated left regular representation is an involutive homomorphism if the relative modular function is the constant function 1. Moreover, in Section 3 of \cite{s4}, we have proved that a necessary condition for a discrete Hecke pair $\gh$ to possess property (RD) is that the relative modular function must be the trivial homomorphism, i.e. $\D_\gh=1$. It is in line with a theorem of R. Ji and L.B. Schweitzer in \cite{JiSch} which asserts that if a locally compact group $G$ has property (RD), then it must be unimodular. Similar conclusion can be made about property (T) of discrete Hecke pairs, see Proposition \ref{prop:PTunimod}. These are some of the evidences that even discrete Hecke pairs have some common features with locally compact groups. Motivated by these observations, we discuss various algebraic and analytic criteria to determine when the relative modular function of a discrete Hecke pair $\gh$ is trivial. For instance, it is shown that if the Hecke algebra associated to $\gh$ satisfies a certain condition, weaker than commutativity, then the relative modular function is trivial, see Proposition \ref{prop:communimod}.

Following the works of G. Schlichting in \cite{sch1, sch2}, K. Tzanev, S. Kaliszewski, Magnus B. Landstad, and John Quigg in \cite{tzanev, klq}, constructed a certain procedure for densely embedding a discrete (reduced) Hecke pair $\gh$ into a Hecke pair $\ghb$, where $\gb$ is a totally disconnected locally compact group and $\hb$ is a compact open subgroup of $\gb$. This technique is called the Schlichting completion of Hecke pairs and it is our main tool to transfer several results from locally compact groups and Hecke pairs into the setting of discrete Hecke pairs.

The works presented in this paper and its companion \cite{s4} not only contribute to the theory of Hecke \cs-algebras and property (RD) for Hecke pairs, but they also serve as the prototypes of ideas and methods which relate the study of Hecke pairs with the area of locally compact groups. We hope that this paper provides sufficient preparations and clues for further developments in other aspects related to these areas.

\section {Locally compact Hecke pairs $\gh$}
\label{sec:LCHP}

Our main aim in this section is to define an extended setting to study Hecke pairs $\gh$ which admit left regular representations on $L^2(\hg)$. Here, our guiding principle in developing the theory of locally compact Hecke pairs is that it should coincide with the discrete case when the subgroup in the Hecke pair is an open Hecke subgroup. In this way, we will be able to transfer useful results and ideas from locally compact case into discrete case and vice versa. In order to define a general and consistent \cs-algebraic framework to study Hecke algebras, we need to define both an algebra and an involution on this algebra. These two steps are handled slightly differently in discrete and non-discrete cases.

\begin{definition}
\label{def:discretehp}
\begin{itemize}
\item [(i)]  Let $\gh$ be a pair of locally compact groups. It is called a {\it discrete pair}, if $H$ is open in $G$, otherwise it is called {\it non-discrete}.
\item [(ii)] A discrete pair $\gh$ is called a {\it discrete Hecke pair} if every double coset of $H$ in $G$ is a finite union of finitely many left cosets of $H$ in $G$. In this case, we also say that $H$ is a {\it Hecke subgroup of $G$}\footnote{An alternative name for Hecke subgroups appearing in the literature is ``almost normal subgroups'', see \cite{bc, klq}. However, we follow \cite{klq}. See also Example \ref{exa:Heckeoriginal} for the origin of the name ``Hecke subgroup''. Besides, we use the phrase ``almost normal'' for another notion, see Definition \ref{def:commen-nearly}(iii).}.
\item [(iii)] Given a discrete Hecke pair $\gh$, the vector space of all finite support complex functions on the set $G//H$ of double cosets of $H$ in $G$ is denoted by $\H\gh$ and the above condition allows us to define a {\it convolution product} on $\H\gh$ by
\begin{equation}
\label{eqn:Hcon1}
f_1\ast f_2 (HxH):=\sum_{Hy\in \hg} f_1(Hxy\inv H)f_2(HyH),
\end{equation}
for all $f_1, f_2\in \H\gh$ and $x\in G$.
\end{itemize}
\end{definition}
Given a discrete Hecke pair, the vector space $\H\gh$ with the above product is a complex algebra, which is usually called the Hecke algebra associated with the pair $\gh$, see \cite{hall, krieg} for detailed introductions to this type of Hecke algebras. The algebra $\H\gh$ also admits an involution defined by
\begin{equation}
\label{eqn:traditionalinv}
f\st (HxH):=\overline{f(Hx\inv H)}, \qquad \forall f\in \H\gh, x\in G.
\end{equation}
However, due to some technical reasons explained below, we have to consider a slightly different involution on $\H\gh$, see Definition \ref{def:invdiscrete}.

Using a convolution product similar to (\ref{eqn:Hcon1}), $\H\gh$ is endowed with a {\it left regular representation} as follows:
\begin{equation}
\label{eqn:lregrep-dis}
\la:\H\gh\to B(\ell^2(\hg)),
\end{equation}
\[
\quad \la(f)(\xi)(Hx):=(f\ast \xi)(Hx),
\]
for all $f\in \H\gh, \xi \in \ell^2(\hg)$ and $Hx\in \hg$. By definition, the {\it reduced Hecke \cs-algebra of the Hecke pair $\gh$} is the norm completion of the image of this representation. The class of these Hecke pairs also includes pairs $\gh$, where $G$ is a locally compact group and $H$ is a compact open subgroup of $G$. These Hecke pairs and their associated Hecke \cs-algebras have been studied extensively in the literature, see \cite{curtis, hall, klq, tzanev}.

Now we turn our attention to non-discrete pairs. As it is clear from the discrete case, we are not only interested in the Hecke algebra of a Hecke pair $\gh$, but also we need an explicit regular representation to embed this algebra inside the \cs-algebra of bounded operators on the Hilbert space $L^2(H\ba G)$, where the homogeneous space $H\ba G$ is equipped with an appropriate  measure. This requirement makes us to define the convolution product, $L^1$-norm and $L^2$-norm using integrations over the locally compact space of right cosets of $H$ in $G$, and therefore, we need to set up an appropriate measure theoretic framework. In what follows, we often follow the notations of \cite{folland-ha} for measure theoretic considerations.

In this paper, $\mu$ always denotes a right Haar measure on $G$, $\eta$ denotes a right Haar measure on $H$ and $\Delta_G$ (resp. $\Delta_H$) denotes the modular function of $G$ (resp. $H$). The vector space of all complex valued compact support continuous functions on a (locally compact) topological space $X$ is denoted by $C_c(X)$. There is an onto map $P:C_c(G)\to C_c(H\ba G)$ defined by
\[
Pf(Hx):=\int_H f(hx) d\eta (h), \qquad \forall f\in C_c(G), x\in G.
\]
It is well known that we have
\begin{equation}
\label{eqn:invariantmeasure}
\Delta_G |_H=\Delta_H
\end{equation}
if and only if there exists a right $G$-invariant Radon measure $\nu$ on $H\ba G$. In this case, $\nu$ is unique up to a positive multiple. By choosing the multiple suitably, we obtain {\it Weil's formula} for the decomposition of an integral on $G$ into a double integral on $H$ and $\hg$ as follows:
\begin{equation}
\label{eqn:invmeasure}
\int_G f(x)d\mu(x)= \int_{\hg } Pf(y) d\nu(y)=\int_{\hg } \int_H f(hy) d\eta(h) d\nu(y),
\end{equation}
for all $f\in C_c(G)$. Then we briefly say that {\it the triple $(\eta, \mu, \nu)$ satisfies Weil's formula}. One notes that $y$ in the above formula varies over a complete set of representatives of right cosets. In fact, we could have used the notation $Hy$ instead of $y$, and we sometimes use the right coset notation in order to clarify our computations.
\begin{assumption}
\label{assume:r-G-inv}
We restrict our study to those pairs $\gh$ whose homogeneous spaces $H\ba G$ possess right $G$-invariant Radon measures, denoted by $\nu$, satisfying Equality (\ref{eqn:invmeasure}).
\end{assumption}
Inspired by the discrete case, for a given non-discrete pair $\gh$, we define
\begin{equation}
\label{eqn:Hgh-nondis}
\H\gh:=\left\{f\in C_c(\hg); f(Hxh)=f(Hx),\, \forall\,  x\in G, h\in H \right\}.
\end{equation}
Every element of $\H\gh$ can be thought of as a function on the set of double cosets of $H$ in $G$. Therefore, for every $f\in \H \gh$ and $x\in G$, each of the expressions $f(HxH)$, $f(xH)$ and $f(x)$ has the same meaning as $f(Hx)$. The complex vector space $\H\gh$ is equipped with the following convolution product:
\begin{equation}
\label{eqn:Hgh-convolution-nondis}
f\ast g (Hx):=\int_{\hg} f(Hxy\inv H)g(Hy)d\nu(Hy),\quad \forall f,g \in \H\gh.
\end{equation}
For $f$ and $g$ as above, let $S_f$ and $S_g$ be the supports of $f$ and $g$, respectively. Let $\pi:G\to \hg$ be the natural quotient map. By Lemma 2.46 of \cite{folland-ha}, there are compact subsets $A_f,A_g\subseteq G$ such that $\pi(A_f)=S_f$ and $\pi(A_g)=S_g$. Since $A_f A_g$ is a compact subset of $G$, $\pi(A_fA_g)$ is compact, and one easily observes that $supp(f\ast g) \subseteq \pi(A_fA_g)$.  It is straightforward to check that $f\ast g$ is continuous. Finally, it follows from the right $G$-invariance of $\nu$ that  $f\ast g\in \H\gh$. By applying appropriate change of variables, the right $G$-invariance of $\nu$ and Fubini's theorem, it is straightforward to check that the above convolution product is associative. Thus Formula (\ref{eqn:Hgh-convolution-nondis}) defines a product on $\H\gh$. As a technical point, one notes that as long as a compact support function appears in the integrand of a double integral, Fubini's theorem can be applied to change the order of integration. In many ``straightforward'' computations omitted from our arguments, this point allows us to use Fubini's theorem. 
\subsection*{Defining involutions}
\label{subsec:involution}
In order to define appropriate involutions on the Hecke algebra $\H\gh$, again we have to treat discrete and non-discrete cases, separately. We also note the following remark:
\begin{remark}
\label{rem:inverse-involution}
    Since our formulation of Hecke pairs and Hecke algebras is based on right cosets and right Haar measures, in any formula that contains modular functions, we have to consider the inverse of the modular function. For instance, given a right Haar measure $\mu$ on a locally compact group $G$ with the modular function $\Delta$, we have
    \begin{equation*}
    \mu(xE)=\Delta(x^{-1})\mu(E),\qquad \forall x\in G, \quad E\subset G \text{ measurable}.
    \end{equation*}
    While the above formula, considering a left Haar measure $\theta$, would be $\theta(Ex)=\Delta(x)\theta(E)$. In what follows, a similar modification will be considered regarding discrete Hecke pairs.
\end{remark}
We proceed with some notations for the discrete case. For every discrete Hecke pair $\gh$, we define two functions $L, R:G\to \n$ by
\[
L(x):=[H:H_x]=|HxH/H|, \qquad R(x):= [H:H_{x\inv}] =|H\ba HxH|,
\]
for all $x\in G$, where $H_x:=H\cap xHx\inv$. The integer $L(x)$ (resp. $R(x)$) is the number of distinct left (resp. right) cosets appearing in the double coset $HxH$, and so we have $R(x)=L(x\inv)$.
\begin{definition}
\label{def:relativemod}
The {\it relative modular function of a discrete Hecke pair $\gh$} is the function $\Delta_\gh:G\to \q^+$ defined by
\[
\Delta_\gh(x):=\frac{L(x)}{R(x)}, \qquad \forall x\in G.
\]
The discrete Hecke pair $\gh$ is called {\it relatively unimodular} if $\Delta_\gh(x)=1$ for all $x\in G$.
\end{definition}
In fact, the function $\Delta_\gh$ is a group homomorphism whose kernel contains $H$, see for instance Proposition 2.2 of \cite{tzanev}. It follows that $\Delta_\gh$ is always a continuous bi-$H$-invariant function on $G$. Moreover, when $H$ is a compact open subgroup of $G$, it was noted in \cite{tzanev, klq} that
\begin{equation}
\label{eqn:relativeequality}
\Delta_\gh(x)=\Delta_G(x),\qquad \forall x\in G,
\end{equation}
see for instance Page 669 of \cite{klq} for a proof. We should mention that the above equality also appeared in Lemma 1 of G. Schlichting's paper \cite{sch1}. One also notes that the left hand side of the above equation is algebraic, thus as long as $H$ is a compact open subgroup of a locally compact group $G$, the above equation holds.
\begin{remark}
\label{rem:l1-norm}
    Although we defined $\H\gh$ as the algebra of finite support functions over the set $G//H$ of double cosets of $H$ in $G$, we define the $\ell^1$-norm of elements of $\H\gh$ as functions over the set $\hg$ of right cosets of $H$ in $G$, more precisely
    \begin{equation*}
    \|f\|_1:=\sum_{Hx\in \hg} |f(Hx)| = \sum_{HyH\in G//H} R(y)|f(HyH)|, \quad \forall f\in \H\gh.
    \end{equation*}
\end{remark}
It is important to note that the left regular representation $\la$, defined in (\ref{eqn:lregrep-dis}), is an involutive homomorphism with respect to the involution defined in (\ref{eqn:traditionalinv}). However, this involution does not necessarily preserve the $\ell^1$-norm of elements of $\H\gh$. Even worse, one can see that this involution is not always continuous with respect to the $\ell^1$-norm. Therefore we are not able to show directly that the left regular representation $\la$ is continuous. The following example illustrates these points:
\begin{example}
\label{exa:involutionnot}
Let $\gh$ be the Bost-Connes Hecke pair, that is
\[
G=\left\{ \left( \begin{array} {rr}1&b\\0&a \end{array}\right); a\in \q^+, b\in \q \right\}, \quad \text{and} \quad H=\left\{ \left( \begin{array} {rr}1&n\\0&1
\end{array}\right); n\in \z \right\}.
\]
For given $g=\left( \begin{array} {rr}1&b\\0&\frac{m}{n} \end{array}\right)\in G$, where $m$ and $n$ have no common prime factors, one easily computes $L(g)=n$ and $R(g)=m$, see 2.1.1.3 of \cite{hall}. Let $\chi_g$ denote the characteristic function of the double coset $HgH$ considered as an element of $\H\gh$. Then $\|\chi_g\|_1 = R(g)=m$ and  $\|\chi_g\st\|_1 = L(g)=n$. Therefore by replacing $g$ with the elements of the sequence $\left\{\left( \begin{array} {rr}1&0\\0&\frac{1}{n} \end{array}\right)\right\}_{n\in \n}$, we observe that the involution $\st$ on $\H\gh$ is not continuous in $\ell^1$-norm.
\end{example}
The problem is that the mapping $Hx\mapsto Hx\inv$ is not even a well defined change of variable in $\hg$, of course, unless $H$ is a normal subgroup of $G$. However, the map $HxH\mapsto Hx\inv H$ is a well defined bijection over the set of double cosets of $H$ in $G$ and so the involution $\st$ is well defined. To obtain a norm preserving involution which can be generalized to non-discrete case, we use the definition given in \cite{klq} (with an appropriate modification regarding Remark \ref{rem:inverse-involution}):
\begin{definition}
\label{def:invdiscrete}
For a discrete Hecke pair $\gh$, the following involution is defined on the algebra $\H\gh$:
\begin{equation}
\label{eqn:invdiscrete}
f\s (HxH):=\D_\gh(x) \overline{f(Hx\inv H)},\quad \forall f\in \H\gh, x\in G.
\end{equation}
\end{definition}
Given a discrete Hecke pair $\gh$ and $x\in G$, let $\chi_x$ denote the characteristic function of the double coset $HxH$. Then the following identity immediately follows from the above definition, see also Page 660 of \cite{klq}:
\begin{equation}
\label{eqn:invcharacteristic}
\chi_x^\ast=\Delta(x\inv)\chi_{x\inv},\quad \forall x\in G.
\end{equation}
Now, using this identity, one checks that the involution defined in Definition \ref{def:invdiscrete} preserves the $\ell^1$-norm, that is
\begin{equation*}
\|f^\ast\|_1=\|f\|_1,\quad \forall f\in \H\gh.
\end{equation*}
Now we turn our attention to non-discrete pairs. In the following example, we observe that the same definition for relative modular functions on non-discrete pairs is not possible:
\begin{example}
\label{exa:sl2r1} Let $G$ be the special linear group of degree 2, $SL_2(\r)$, and let $H$ be its compact subgroup $SO_2(\r)=\left\{ \left( \begin{array}{cc} cos \th  & -sin\th \\ sin\th & cos\th \end{array} \right); -\pi<\th\leq \pi \right\}$. For a non-zero real number $t$, set $x:=\left(\begin{array}{cc} e^t & 0 \\ 0 & e^{-t} \end{array} \right)$. Then one computes that $H_x=\{ I, -I\}$, where $I$ is the $2\times 2$ identity matrix. Therefore $[H:H_x]=\infty$. This shows that functions $R$, $L$ and $\D_\gh$ are not well defined for the pair $\gh$, but we shall see that the pair $\gh$ admits a left regular representation, and so it will be considered as a (non-discrete) Hecke pair in our generalized setting.
\end{example}

Due to the lack of relative modular functions on non-discrete Hecke pairs, Formula (\ref{eqn:invdiscrete}) cannot be adopted for the non-discrete case. However, Equality (\ref{eqn:relativeequality}) suggests that we use the modular function of $G$ in the non-discrete case in lieu of the relative modular function in the discrete case. In order to this work, we have to impose another restriction:
\begin{assumption}
\label{assume:H-unimod}
When $\gh$ is a non-discrete pair, we assume $H$ is unimodular.
\end{assumption}
Thus, regarding the above assumption and Assumption \ref{assume:r-G-inv}, for a non-discrete Hecke pair $\gh$, we always assume $\D_G|_H=\D_H=1$. When there is no risk of confusion, we can ignore $\D_H$ and denote the modular function of $G$ simply by $\D$. It follows from the above assumption that $\D$ is a bi-$H$-invariant function on $G$, so the following definition is allowed:
\begin{definition}
\label{def:invo-nondis}
Assume $\gh$ is a non-discrete pair. The involution on the algebra $\H\gh$ is defined by
\begin{equation}
\label{eqn:inv-nondis}
f\s(Hx):= \D(x) \overline{f(Hx\inv )}, \qquad \forall f\in \H\gh,\, x\in G.
\end{equation}
\end{definition}
To see how the right $G$-invariance of $\nu$ is used to prove that this map is an involution on $\H\gh$, we show that it is an anti-homomorphism. For every $f,g\in \H\gh$ and $x\in G$, we compute
\[
(f\ast g)\s(Hx)= \D(x)\overline{f\ast g (Hx\inv)}=\D(x) \overline{\int_\hg f(x\inv y\inv)g(y) d\nu(y)}
\]
Set $Hz:=Hyx$. Due to the right $G$-invariance of $\nu$, this is a measure preserving change of variable over $\hg$. Thus we have
\begin{eqnarray*}
(f\ast g)\s(Hx)&=& \int_\hg \D(x) \overline{f(z\inv)}\ \overline{g((xz\inv)\inv)}d\nu(z)\\
&=& \int_\hg \left[\D(xz\inv) \overline{g((xz\inv)\inv)}\right] \left[\D(z)\overline{f\s (z\inv) }\right] d\nu(z) \\
&=& \int_\hg g\s (xz\inv)f\s (z) d\nu(z)=g\s \ast f\s(Hx).
\end{eqnarray*}
The proof of the rest of axioms of involutions are straightforward, and so are left to the reader. 
\subsection*{Defining Hecke pairs and Hecke algebras}
\label{subsec:Hecke-pairs-algebras}
Now that we have defined appropriate involutions for both discrete and non-discrete cases, we are ready to define Hecke algebras:
\begin{definition}
\label{def:heckealg-dis-nondis}
\begin{itemize}
\item [(i)] When $\gh$ is a discrete Hecke pair, the algebra $\H\gh$ of finite support complex function on the set of double cosets of $H$ in $G$ with the involution defined in (\ref{eqn:invdiscrete}) is called the {\it Hecke algebra of the discrete Hecke pair $\gh$}.
\item [(ii)] When $\gh$ is a non-discrete pair satisfying Assumptions \ref{assume:r-G-inv} and \ref{assume:H-unimod}, the involutive algebra $\H\gh$ defined by (\ref{eqn:Hgh-nondis}), (\ref{eqn:Hgh-convolution-nondis}) and (\ref{eqn:inv-nondis}) is called the {\it Hecke algebra of the pair $\gh$}.
\end{itemize}
\end{definition}

We have not defined a Hecke pair in the non-discrete case yet. The reason is that unlike discrete cases, a non-discrete pair $\gh$ satisfying Assumptions \ref{assume:r-G-inv} and \ref{assume:H-unimod}, and without any extra conditions, gives rise to a Hecke algebra. In fact, part of the condition of $H$ being a Hecke subgroup of $G$ is encoded in the definition of $\H\gh$ in (\ref{eqn:Hgh-nondis}), see also Remark \ref{rem:condition-b}(i) below. Since our intention is to study Hecke algebras in the realm of \cs-algebras, we seek those conditions which facilitate the embedding of Hecke algebras inside a \cs-algebra. Unfortunately, we are not always able to define a left regular representation on the Hilbert space $L^2(\hg)$, as it is clear in the discrete case, see Remark \ref{rem:discretehp} below. On the other hand, the requirement that every double coset has only finitely many left cosets is not necessary for a non-discrete pair $\gh$ to admit a left regular representation, see Example \ref{exa:sl2r1} and Theorem \ref{thm:Hcompact}. Therefore we propose the following definition as the generalization of Definition \ref{def:discretehp}(ii) for non-discrete pairs $\gh$:

\begin{definition}
\label{def:heckepair-allcases}
\begin{itemize}
\item [(i)] Let $\gh$ be a non-discrete pair satisfying Assumptions \ref{assume:r-G-inv} and \ref{assume:H-unimod}. It is called a {\it non-discrete Hecke pair} if the followings hold:
    \begin{itemize}
       \item [(a)] The mapping $\la:\H\gh\to B(L^2(\hg, \nu))$ defined by $\la(f)(\xi):= f\ast \xi$ for all $f\in \H\gh$ and $\xi\in L^2(H\ba G)$ is a well defined homomorphism.
       \item [(b)] For every $x\in G$, there exists some $f\in \H\gh$ such that $f(Hx)\neq 0$.
    \end{itemize}
\item [(ii)] When $\gh$ is a non-discrete Hecke pair, the homomorphism $\la$ is called the {\it left regular representation of the Hecke pair $\gh$} and the norm completion of the image of $\H\gh$ under $\la$ in the \cs-algebra $B(L^2(\hg))$ is called the {\it reduced Hecke \cs-algebra of the Hecke pair $\gh$} and is denoted by $\csr\gh$.
\end{itemize}
\end{definition}
\begin{remark}
\label{rem:condition-b}
\begin{itemize}
\item[(i)] To see the relevance of Condition (b) of Definition \ref{def:heckepair-allcases}(i), let $H$ be an arbitrary subgroup of a discrete group $G$ and define the {\it commensurator of $H$ in $G$} by  $Comm_G(H):=\{g\in G; R(g), L(g)<\infty \}$. It is known that $Comm_G(H)$ is a subgroup of $G$ and $Comm_G(H)=G$ if and only if $H$ is a Hecke subgroup of $G$. The vector space $\H\gh$ as defined in (\ref{eqn:Hgh-nondis}) coincides with the vector space of all finite support complex functions on the set $G//H$ of double cosets of $H$ in $G$ if and only if Condition (b) holds. Otherwise, the supports of functions in $\H\gh$ would be contained in $Comm_G(H) //H$. Condition (b) is especially important in the non-discrete case that we do not have any algebraic condition to impose the same restriction.
\item[(ii)] In the case that $G$ is a discrete group or in the case that $H$ is a compact open subgroup of a locally compact group $G$, Definition \ref{def:heckepair-allcases}(i) is equivalent to Definition \ref{def:discretehp}(ii). Therefore in these cases, the pair $\gh$ can be considered both as discrete and non-discrete. Ambiguity arise only when $H$ is an open Hecke subgroup of $G$. Because, in this case the topology and measure structure of $G$ and $H$ are ignored (both are considered discrete), and so we do not have to worry about Assumptions \ref{assume:r-G-inv} and \ref{assume:H-unimod}. That is why we have separated the definition of discrete and non-discrete Hecke pairs.
\item[(iii)] When $H$ is a compact or cocompact subgroup of a locally compact group $G$, Condition (b) of Definition \ref{def:heckepair-allcases}(i) readily holds for the pair $\gh$.
\end{itemize}
\end{remark}

Assume $\gh$ is a non-discrete Hecke pair satisfying Condition (b) of Definition \ref{def:heckepair-allcases}(i). To check Condition (a), we must first prove that $\la(f)$ is a bounded operator on the Hilbert space $L^2(\hg, \nu)$ for all $f\in \H\gh$. Next, a typical argument based on Fubini's theorem, shows that $\la$ is a homomorphism.
\begin{remark}
\label{rem:star-hom}
    Given a discrete (resp. non-discrete) Hecke pair $\gh$, a straightforward computation shows that the left regular representation $\la$ is a $\ast$-homomorphism whenever the discrete Hecke pair $\gh$ is relatively unimodular (resp. the group $G$ is unimodular).
\end{remark}
Regarding the similarity between Hecke pairs and  groups, one wishes to prove that $\la$ is a bounded operator, that is to find a constant $M>0$ such that $\la(f)\leq M\|f\|_1$. Although we are not able to prove this last statement for general Hecke pairs, we shall prove it in some important cases, see Theorem \ref{thm:Hcompact}, Theorem \ref{thm:lambdabounded-dis} and the proof of Theorem \ref{thm:cocompact}. One also notes that Assumption \ref{assume:H-unimod} is needed only if we are interested to have an involution on the Hecke algebra $\H\gh$.
\begin{remark}
\label{rem:discretehp}
Assume $G$ is a discrete group, as it has already been studied in the literature, the pair $\gh$ is a Hecke pair in the sense of Definition \ref{def:heckepair-allcases}(i) if and only if it is a Hecke pair in the sense of Definition \ref{def:discretehp}(ii). The ``if'' part was proved in Proposition 1.3.3 of \cite{curtis}. To see the ``only if'' part, assume that there is a double coset $HxH$ which contains infinitely many distinct right cosets, say $\{Hx_k\}_{k=1}^\infty$. For every $k\in \n$, let $\chi_k\in \H\gh$ be the characteristic function of the double coset $Hx_kH$ and let $\d_e\in \ell^2(\hg)$ be the characteristic function of the right coset $H$. Then one easily computes $\chi_k\ast \d_e(Hx_k)=1$ for all $k\in \n$, and so $\|\chi_k\ast \d_e\|_2=\infty$.
\end{remark}
\subsection*{Reduction of Hecke pairs}
\label{subsec:reduction-Hecke-pairs}
The following proposition is a useful tool to realize more pairs $\gh$ of locally compact groups as Hecke pairs. It is also applied in the reduction of Hecke pairs.
\begin{proposition}
\label{prop:normalcomponent}
Let $\gh$ be a pair and let $N$ be a normal closed subgroup of $G$ contained in $H$. Set $G':=\frac{G}{N}$ and $H':=\frac{H}{N}$.
\begin{itemize}
\item [(i)] If the pair $\gh$ satisfies Assumptions \ref{assume:r-G-inv} and \ref{assume:H-unimod}, then the pair $(G',H')$ satisfies them too.
\item [(ii)] If $N$ is unimodular and $H'$ is compact, then both pairs $(G',H')$ and $\gh$ satisfy Assumptions \ref{assume:r-G-inv} and \ref{assume:H-unimod}.
\item [(iii)] Assume that the pair $\gh$ satisfies Assumptions \ref{assume:r-G-inv} and \ref{assume:H-unimod}, then the pair $\gh$ is a Hecke pair if and only if the pair $(G',H')$ is a Hecke pair. In this case, the Hecke algebras $\H\gh$ and $\H(G',H')$ are canonically isomorphic.
\item[(iv)] With the assumptions of Item (iii), the left regular representation of $\H\gh$ is bounded if and only if the left regular representation of $\H(G',H')$ is bounded. Moreover, the \cs-algebras $\csr\gh$ and $\csr(G',H')$ are canonically isomorphic.
\end{itemize}
\end{proposition}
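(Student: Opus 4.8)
The plan rests on one elementary observation: since $N$ is normal in $G$ and contained in $H$, the quotient map $q\colon G\ra G'$ descends to a homeomorphism $H\ba G\cong H'\ba G'$, $Hx\mapsto H'q(x)$, which is equivariant for the right $G$-action on $H\ba G$ and the right $G'$-action on $H'\ba G'$; in particular $N$ acts trivially on $H\ba G$, so that action factors through $G'$ to begin with. First I would push the measure theory across this map: a right $G$-invariant Radon measure $\nu$ on $H\ba G$ pushes forward to a right $G'$-invariant Radon measure $\nu'$ on $H'\ba G'$ and conversely, so Assumption \ref{assume:r-G-inv} holds for $\gh$ iff it holds for $\ghpr$. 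Next I would fix compatible Haar measures: choose a Haar measure on $N$ (legitimate because, by the modular calculus below, $N$ is automatically unimodular under our hypotheses) and let $\mu'$, $\eta'$ be the corresponding quotient measures of $\mu$, $\eta$ on $G'=G/N$, $H'=H/N$; a routine check shows $(\eta',\mu',\nu')$ satisfies Weil's formula for $\ghpr$. Pushing functions forward along the homeomorphism then restricts to a linear isomorphism $\Phi\colon\H\gh\ra\H\ghpr$ between the spaces of bi-$H$-invariant and bi-$H'$-invariant compactly supported continuous functions, and a direct substitution in (\ref{eqn:Hgh-convolution-nondis}) shows $\Phi$ intertwines the two convolutions. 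This already gives the algebra isomorphism asserted in part (iii).

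To finish parts (iii) and (iv) I would transport the left regular representation. The map $\Phi$ extends to a unitary $L^2(H\ba G,\nu)\ra L^2(H'\ba G',\nu')$, and the same substitution used for convolution gives $\Phi(f\ast\xi)=(\Phi f)\ast(\Phi\xi)$ for $f\in\H\gh$, $\xi\in L^2(H\ba G)$; hence $\la_{\ghpr}(\Phi f)=\Phi\,\la_\gh(f)\,\Phi\inv$. Consequently $\la_\gh(f)$ is a bounded operator iff $\la_{\ghpr}(\Phi f)$ is, with equal norms, and since $\|\Phi f\|_1=\|f\|_1$ an estimate $\|\la(f)\|\le M\|f\|_1$ holds for one pair iff it holds for the other with the same $M$; Condition (b) of Definition \ref{def:heckepair-allcases}(i) is visibly the same for the two pairs since their coset spaces have been identified. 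So $\gh$ is a Hecke pair iff $\ghpr$ is, the boundedness claim of (iv) follows, and $\csr\gh\cong\csr(G',H')$ (indeed spatially, via $\Phi$). The one point that needs extra care in (iii) is the involution: (\ref{eqn:inv-nondis}) on $\H\gh$ is built from $\D_G$ while the one on $\H\ghpr$ uses $\D_{G'}$, and writing $\d_N\colon G\ra\r^+$ for the modulus of the conjugation action of $G$ on $N$ one has $\D_G=(\D_{G'}\circ q)\cdot\d_N$; since $\d_N|_N=\D_N$, the character $\d_N$ descends to a character $\bar\d_N$ of $G'$ once $N$ is unimodular, and $\Phi(f\s)=\bar\d_N\cdot(\Phi f)\s$. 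Post-composing $\Phi$ with multiplication by the (continuous, positive) character $\bar\d_N^{-1/2}$, which is an automorphism of the convolution algebra, repairs this and yields an isomorphism of involutive algebras. When $\bar\d_N\equiv1$ --- in particular in the discrete case, where $\Phi$ moreover matches the combinatorial relative modular functions $\D_\gh$ and $\D_\ghpr$, and in the situation of (ii) --- no twist is needed and $\Phi$ itself is a $\ast$-isomorphism.

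It remains to establish the Assumptions for $\ghpr$, which is the part I expect to be the main obstacle. I would use two standard facts for a closed normal subgroup $N\trianglelefteq G$: $\D_G|_N=\D_N$, and the identity $\D_G=(\D_{G'}\circ q)\cdot\d_N$ above. For (i): from $\D_G|_H=\D_H=1$ (Assumptions \ref{assume:r-G-inv} and \ref{assume:H-unimod} for $\gh$) we get $\D_N=\D_G|_N=1$, so $N$ is unimodular --- which also justifies the quotient-measure construction used throughout --- and then, applying the tower identity to $N\trianglelefteq H$ and using $\D_H=1$, one finds $\D_{H'}=\bar\d_N|_{H'}\inv$, while $\D_G|_H=1$ forces $\D_{G'}|_{H'}=\bar\d_N|_{H'}\inv$ as well (reconfirming Assumption \ref{assume:r-G-inv} for $\ghpr$ independently of the measure argument). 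Thus the substance of the Assumption \ref{assume:H-unimod} clause of (i) is precisely that $\bar\d_N$ is trivial on $H'$, i.e. that $H$ acts on $N$ by measure-preserving conjugation; this is the step I would scrutinize most carefully. It is automatic in case (ii): there $H'=H/N$ is compact, a compact group carries no nontrivial character into $\r^+$, so $\bar\d_N|_{H'}=1$, giving $\D_{H'}=1$; moreover both $\D_H$ and $\D_G|_H$ are characters of $H$ trivial on the (given) unimodular $N$, hence factor through the compact $H'$ and are therefore trivial, so $\gh$ also satisfies Assumptions \ref{assume:r-G-inv} and \ref{assume:H-unimod}. With all this in hand I would assemble the four parts: (i) from the measure push-forward together with the $\bar\d_N$ computation, (ii) from the compactness argument, (iii) from $\Phi$ (twisted by $\bar\d_N^{-1/2}$ when needed), and (iv) from the unitary $\Phi$ intertwining $\la_\gh$ with $\la_\ghpr$.
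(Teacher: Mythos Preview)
Your plan---transport everything across the canonical homeomorphism $\hg\cong H'\backslash G'$---is exactly the paper's, and for (ii)--(iv) your argument matches it: the paper builds the same bijection (there called $\pi'$), pushes $\nu$ forward to a $G'$-invariant Radon measure, and checks that the induced linear isomorphism $\ff$ restricts to an algebra isomorphism $\H\gh\to\H\ghpr$ and extends to an $L^2$-isometry intertwining the two left regular representations. Your involution repair via multiplication by $\bar\d_N^{\pm1/2}$ goes a step beyond the paper, which only records in the remark following the proof that $\ff_\H$ need not be a $\ast$-map; note, though, that this twist preserves bi-$H'$-invariance only when $\bar\d_N|_{H'}=1$, i.e.\ precisely when $H'$ is already unimodular.

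The one genuine issue is in (i), and here your instinct is sharper than the paper's argument. The paper dispatches Assumption~\ref{assume:H-unimod} for $\ghpr$ in a single line: ``Since $H$ is unimodular and $N$ is normal in $H$, both $N$ and $H'$ are unimodular too.'' You correctly reduce this to the condition $\bar\d_N|_{H'}=1$ and flag it for scrutiny; your caution is warranted, because the implication is \emph{false}. Let $G_0=\r^2\rtimes\r$ with $t\cdot(y,z)=(e^ty,e^{-t}z)$; the action has determinant $1$, so $G_0$ is unimodular. Take $G=G_0\times\r$, $H=G_0\times\{0\}$, and $N=(\r\times\{0\}\times\{0\})\times\{0\}$. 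Then $\gh$ is a non-discrete pair with $\D_G|_H=\D_H=1$, and $N$ is closed, normal in $G$, and contained in $H$, yet $H'=H/N\cong\r\rtimes\r$ (the $ax+b$ group) is not unimodular. So part (i) as stated cannot be completed; neither your outline nor the paper's bare assertion closes this gap. Your derivation that $\D_{G'}|_{H'}=\D_{H'}$ is correct---so Assumption~\ref{assume:r-G-inv} does transfer, as the paper also shows via its measure construction---but Assumption~\ref{assume:H-unimod} can genuinely fail to pass to the quotient.
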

\begin{proof}
Only Items (iii) and (iv) are applied to discrete pairs and their proof in this case is easy. Therefore, we only prove the proposition for non-discrete pairs.
\begin{itemize}
\item [(i)] Let $\gh$ satisfies Assumptions  \ref{assume:r-G-inv} and \ref{assume:H-unimod}. Since $H$ is unimodular and $N$ is normal in $H$, both $N$ and $H'$ are unimodular too, and so the assumption \ref{assume:H-unimod} holds for $(G',H')$.

    To check Assumption \ref{assume:r-G-inv}, it is enough to show that $H'\ba G'$ possesses a right $G'$-invariant measure. In the following argument $x$ is always an arbitrary element of $G$. Let $\pi:G\to G'$, $x\mapsto \bar{x}$ be the quotient map. Define $\pi':\hg \to H'\ba G'$ by $Hx\mapsto H' \bar{x}$. It is easy to see that $\pi'$ is a well defined bijection. Consider the commutative diagram
    \begin{equation}
    \label{diag:piprime}
    \xymatrix{
    G \ar[rr]^{\pi} \ar[d]&& G'\ar[d]  \\
    H\ba G \ar[rr]^{\pi'} && H'\ba G'
    }
    \end{equation}
    Since $\pi$ and the vertical arrows are continuous open mappings, $\pi'$ is a homeomorphism. Therefore it gives rise to a linear isomorphism
    \[
    \ff:C_c(\hg)\to C_c(H'\ba G'),\qquad \ff(f)(H'\bar{x}):=f(Hx),
    \]
    for all $f\in C_c(\hg)$ and $H'\bar{x}\in H'\ba G'$. In the following $g$ represents an arbitrary element of $C_c(H'\ba G')$. First we note that the inverse of $\ff$ is given by $\ff\inv(g)=g\circ \pi'$. Next, we define a functional $I: C_c(H'\ba G')\to \c$ by
    \[
    I(g):=\int_\hg g\circ \pi'(Hx) d\nu(x).
    \]
    One checks that $I$ is a positive linear functional on $C_c(H'\ba G')$. Furthermore, $I(g)=0$ if and only if $g=0$ almost everywhere. It is straightforward to check that $I$ is invariant by the action of $G'$ on $C_c(H'\ba G')$ induced by the right action of $G'$ on $H'\ba G'$, that is $I(R_{\bar{y}}(g)) = I(g)$ for all $\bar{y}\in G'$, here as usual $R_{\bar{y}} (g) (H\bar{x}) = g(H\bar{x}\bar{y})$ for all $H'\bar{x}\in H'\ba G'$. Therefore the positive functional $I$ defines a right $G'$-invariant Radon measure $\th$ on $H'\ba G'$, which amounts to saying that $\D_{G'}|_{H'}=\D_{H'}$.
\item [(ii)] Assumptions \ref{assume:r-G-inv} and \ref{assume:H-unimod} for $(G',H')$ easily follow from Proposition 2.27 of \cite{folland-ha}. Thus we only need to prove these assumptions for the pair $\gh$. Since $N$ is unimodular, $N\subseteq Ker\D_G$. Therefore there is a continuous group homomorphism $\rho:G'\to \r^+$ such that $\rho \circ \pi= \D_G$. Since $H'$ is a compact subgroup of $G'$, $\rho(H')$ is a compact subgroup of $\r^+$. But the only compact subgroup of $\r^+$ is the trivial subgroup $\{1\}$. Hence $\rho|_{H'}=1$ and this implies that $\D_G|_H=1$. Similarly one shows that $\D_{H}=1$.
\item [(iii)] Regarding the way we defined the measure $\th$ on $H'\ba G'$ in the above, we have
    \[
    \int_{H'\ba G'} \ff(f)(H'\bar{x}) d\th(H'\bar{x})= \int_\hg f(Hx) d\nu(Hx),
    \]
    for all $f\in C_c(\hg)$. Hence the isomorphism $\ff$ extends to an isometric isomorphism $\ff_2$ between $L^2(\hg)$ and $L^2(H'\ba G')$. On the other hand, one easily checks that $\ff$ maps $\H\gh$ onto $\H(G', H')$. Let  $\ff_\H:\H\gh\to \H(G',H')$ denote the linear isomorphism obtained by restricting $\ff$ to $\H\gh$. Then a straightforward computation shows that $\ff_\H$ preserves the convolution product, so it is an algebra isomorphism and we have
    \[
    \ff_\H(f)\ast \ff_2(\xi)= \ff_2(f\ast \xi), \qquad \forall f\in \H\gh, \xi\in L^2(\hg).
    \]
    The above discussion shows that Condition (a) of Definition \ref{def:heckepair-allcases}(i) holds for $\gh$ if and only if it holds for $(G',H')$. The equivalence of Condition (b) of Definition \ref{def:heckepair-allcases}(i) for pairs $\gh$ and $(G',H')$ follows from the fact that the mapping $\ff_{\H}$ is an isomorphism coming from the canonical surjection $\pi$.
\item[(iv)] Similar to the above item, $\ff$ extends to an isometric isomorphism between $L^1(\hg)$ and $L^1(H'\ba G')$. Then the desired statements follow immediately from (iii).
\end{itemize}
\end{proof}

\begin{remark} With the notations of the above proposition, we note the followings:
\begin{itemize}
\item [(i)] To see that the compactness of $H'$ is necessary in Proposition \ref{prop:normalcomponent}(ii), consider the group $H=\r \rtimes\r^\times$ and its normal subgroup $N=\r$. The locally compact group $H$ is not unimodular. However, both $N$ and $\frac{H}{N}=\r^\times$ are unimodular.
\item [(ii)] It is straightforward to see that the map $\ff_\H$ preserves involution if and only if $\D_G(x)=\D_{G'}(\bar{x})$ for all $x\in G$. However, the above example also shows that this formula is not generally true.
\end{itemize}
\end{remark}

There are also other ways of constructing new Hecke pairs from given ones. For example, direct product of finitely many Hecke pairs or considering extensions of discrete groups by discrete Hecke pairs, as it was done for discrete Hecke pairs in Section 3 of \cite{s2}.


\section{Unimodularity and relative unimodularity}
\label{sec:relativemod}

In this section, $\gh$ is always a discrete Hecke pair, and for $g\in G$, we denote the characteristic function of the double coset $HgH$ by $\chi_g$. To motivate the subject of this section, besides Remark \ref{rem:star-hom}, we note the following points: Two involutions $\st$ and $\s$ defined on $\H\gh$ in (\ref{eqn:traditionalinv}) and (\ref{eqn:invdiscrete}), respectively, agree if and only if the Hecke pair $\gh$ is relatively unimodular. On the other hand, it is easy to check that this latter condition holds if and only if the involution $\st$ preserves the $\ell^1$-norm of $\hg$, see also Example \ref{exa:involutionnot}. Moreover, when $H$ is a compact open subgroup of a locally compact group $G$, the equality $\D_\gh=1$ is equivalent to the unimodularity of $G$. Furthermore, the relative unimodularity is a necessary condition for property (T) and property (RD) of discrete Hecke pairs, See Proposition \ref{prop:PTunimod} below and our results in \cite{s4}. Therefore we study various criteria implying this condition. We shall also briefly address the non-discrete pairs $\gh$ for which $G$ is unimodular. The following definition introduces an algebraic notion which is equivalent to the relative unimodularity of discrete Hecke pairs:
\begin{definition}
\label{def:semicom} The Hecke pair $\gh$ as well as its Hecke algebra, $\H\gh$, and its reduced Hecke \cs-algebra, $\csr\gh$, are called {\it locally commutative} if $\chi_{g} \ast \chi_{g\inv}(H)=\chi_{g\inv} \ast \chi_{g}(H)$ for all $g\in G$.
\end{definition}

\begin{proposition}
\label{prop:communimod}
    A discrete Hecke pair $\gh$ is relatively unimodular if and only if it is locally commutative.
\end{proposition}
\begin{proof} For a given $g\in G$, assume that the double coset $HgH$ is the disjoint union of right cosets $Hx_i$ for $i=1,\cdots, R(g)$. We compute
\begin{eqnarray*}
\chi_{g\inv} \ast \chi_{g} (H)&=&\sum_{Hy\in \hg}  \chi_{g\inv} (Hy\inv H) \chi_{g} (Hy) \\
&=& \sum_{i=1}^{R(g)} \chi_{g\inv} (Hx_i\inv H) = R(g).
\end{eqnarray*}
By replacing $g$ with $g\inv$, we get $L(g)=\chi_{g} \ast \chi_{g\inv} (H)$. The desired statement follows from these equalities.
\end{proof}
Clearly every commutative Hecke algebra is locally commutative. Therefore the above proposition applies to the following example:
\begin{example}
\label{exa:Heckeoriginal}
Let $G=GL_2(\q)^+:=\{g\in GL_2(\q); \det(g)>0\}$ and $H=SL_2(\z)$. Then $\gh$ is a Hecke pair and its Hecke algebra $\H\gh$ is commutative, see Proposition 1.4.1 and Theorem 1.4.2 of \cite{bump}. In fact, the terminology ``Hecke subgroup'', ``Hecke pair'' and ``Hecke algebra'' has been originated from this Hecke pair which was introduced for the first time in the realm of modular forms by E. Hecke, for more historical notes see \cite{krieg}.
\end{example}
Besides above examples, commutative Hecke algebras appear in numerous situations, for instance see Proposition 3.5 of \cite{tzanev}.
\begin{definition}
\label{def:Gelfandpair}
A (discrete or non-discrete) Hecke pair $\gh$ is called a {\it Gelfand pair}, if the Hecke algebra $\H\gh$ is commutative.
\end{definition}

Now, we propose two operator algebraic conditions implying relative unimodularity of Hecke pairs:
\begin{corollary}
\label{cor:normal-unimod}
    \begin{itemize}
        \item[(i)] If for every $g\in G$, the element $\chi_g\in\H\gh$ is normal, i.e. $\chi_g\ast\chi_g^\ast=\chi_g^\ast\ast\chi_g$, then $\gh$ is relatively unimodular and the image of $\chi_g$ in $\csr\gh$ (under left regular representation) is a normal operator.
        \item[(ii)] For every $g\in G$, $\chi_g$ is a self adjoint element in $\H\gh$, i.e. $\chi_g^\ast=\chi_g$ if and only if
        \begin{equation}
        \label{eqn:doubleinv}
        HgH=Hg\inv H, \quad \forall g\in G.
        \end{equation}
        \item[(iii)] If $\gh$ satisfies the equivalent conditions mentioned in (ii), then $\gh$ is a Gelfand pair, relatively unimodular and the image of $\chi_g$ in $\csr\gh$ is a self adjoint operator for all $g\in G$.
    \end{itemize}
\end{corollary}
\begin{proof}
    \begin{itemize}
        \item[(i)] This follows from Proposition \ref{prop:communimod} and Identity \ref{eqn:invcharacteristic}. Then by Remark \ref{rem:star-hom},  the image of $\chi_g$ is a normal operator in $\csr\gh$ for all $g\in G$.
        \item[(ii)] Assume all $\chi_g$ are self adjoint, so they are normal and by (i) we get $\D_\gh=1$. Then by Identity \ref{eqn:invcharacteristic}, we have $\chi_g=\chi_{g\inv}$ for all $g\in G$ which is equivalent to Equation (\ref{eqn:doubleinv}).

            Equation (\ref{eqn:doubleinv}) clearly implies that $\gh$ is locally commutative and so relatively unimodular. By applying Identity \ref{eqn:invcharacteristic}, we see that $\chi_g$ is a self adjoint element for all $g\in G$.
        \item[(iii)] To show that the Hecke algebra $\H\gh$ is commutative, we only need to show $\chi_g\ast\chi_k=\chi_k\ast\chi_g$ for all $g, k \in G$. First, we note that $\chi_g\ast\chi_k$ is a real valued function. Thus $\chi_g\ast\chi_k=\sum_{HxH\in G//H} c_x \chi_x$, where all coefficients $c_x$ are real numbers and only finitely many of them are non-zero. This shows that $\chi_g\ast\chi_k$ is a self adjoint element, that is $(\chi_g\ast\chi_k)^\ast=\chi_g \ast\chi_k$. On the other hand, we have $(\chi_g\ast\chi_k)^\ast=\chi_k^\ast \ast\chi_g^\ast=\chi_k \ast\chi_g$. This proves that $\gh$ is a Gelfand pair. The rest of the statement is easy.
    \end{itemize}
\end{proof}
We borrowed the proof of Item (iii) from \cite{hall}. Although the involution defined in \cite{hall} is the involution $\st$, defined in Equation (\ref{eqn:traditionalinv}).
\begin{example}
\label{exa:scommut}
\begin{itemize}
\item[(i)] Consider the Hecke pair $(SL_2(\mathbb{Q}_p), SL_2(\mathbb{Z}_p))$, where $p$ is a prime number. The group $SL_2(\mathbb{Q}_p)$ is a totally disconnected locally compact group and $SL_2(\mathbb{Z}_p)$ is a compact open subgroup of this group. Condition (\ref{eqn:doubleinv}) was verified for this Hecke pair in Proposition 2.10 of \cite{hall}.
\item[(ii)] Let $G$ be an abelian group.  Consider the action $\th$ of $\z/2$ on $G$ by inversion, that is $\th_{-1}(g):=g\inv$ for all $g\in G$, where $\z/2$ is considered as the multiplicative group $\{1,-1\}$. One easily checks that Condition (\ref{eqn:doubleinv}) holds for every Hecke pair of the form $(G\rtimes \z/2, \{0 \}\rtimes \z/2)$.
\end{itemize}
\end{example}
The following example shows that Condition (\ref{eqn:doubleinv}) is not necessary for commutativity of the Hecke algebras $\H\gh$:
\begin{example}
\label{exa:commhp1}
Consider the semi-crossed product group $G=(\z/5\times \z/5) \rtimes \z/2$ defined by the action of $\z/2$ on $\z/5\times \z/5$ by flipping the components, that is $(-1)(x,y):=(y,x)$ for all $x,y\in \z/5$, where again the acting copy of $\z/2$ is assumed to be the multiplicative group $\{1,-1\}$. The Hecke algebra associated to the Hecke pair $((\z/5\times \z/5) \rtimes \z/2, \{(0,0)\}\rtimes \z/2)$ is commutative, see for instance Proposition 3.5 of \cite{tzanev}. However one easily checks that Condition (\ref{eqn:doubleinv}) does not hold for this Hecke pair.
\end{example}

An immediate corollary of Equality (\ref{eqn:relativeequality}) and Proposition \ref{prop:communimod} is:
\begin{corollary}
\label{cor:LCG-unimod1}
Let $G$ be a locally compact group which possesses a compact open subgroup $H$. If the discrete Hecke pair $\gh$ is locally commutative, then $G$ is unimodular. In particular, if a discrete Hecke pair $\gh$ is a Gelfand pair, then $G$ is unimodular.
\end{corollary}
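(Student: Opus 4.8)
The plan is to combine the two ingredients already established: Equality~(\ref{eqn:relativeequality}), which states that $\D_\gh(x)=\D_G(x)$ for all $x\in G$ whenever $H$ is a compact open subgroup of $G$, and Proposition~\ref{prop:communimod}, which states that a locally commutative discrete Hecke pair is relatively unimodular, i.e. $\D_\gh(x)=1$ for all $x\in G$.

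First I would observe that since $H$ is a compact open subgroup of $G$, the pair $\gh$ is a discrete Hecke pair in the sense of Definition~\ref{def:discretehp}(ii): openness of $H$ makes it a discrete pair, and compactness of $H$ forces every double coset $HxH$ to be covered by finitely many left cosets of $H$ (a double coset is open, hence its image in the discrete quotient $\hg$ is compact, hence finite). In particular the relative modular function $\D_\gh$ is defined, and the hypothesis that $\gh$ is locally commutative makes sense.

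Next, applying Proposition~\ref{prop:communimod} to the locally commutative discrete Hecke pair $\gh$ yields $\D_\gh(x)=1$ for all $x\in G$. Then invoking Equality~(\ref{eqn:relativeequality}), which is valid precisely because $H$ is a compact open subgroup, we get $\D_G(x)=\D_\gh(x)=1$ for all $x\in G$. Hence the modular function of $G$ is trivial, which is exactly the statement that $G$ is unimodular.

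There is really no main obstacle here: the corollary is a direct concatenation of two previously established facts, and the only thing to check carefully is that the hypotheses of both — that $\gh$ be a genuine discrete Hecke pair and that $H$ be compact open — are simultaneously available, which they are by assumption. I would keep the proof to two or three lines, simply citing Proposition~\ref{prop:communimod} and Equality~(\ref{eqn:relativeequality}) in sequence.
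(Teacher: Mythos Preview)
Your proposal is correct and matches the paper's approach exactly: the paper presents this corollary as an immediate consequence of Equality~(\ref{eqn:relativeequality}) and Proposition~\ref{prop:communimod}, without giving any further argument. Your additional verification that $\gh$ is indeed a discrete Hecke pair is a harmless elaboration of something the paper takes for granted.
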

One notes that the above corollary mostly applies to totally disconnected locally compact groups, because they have plenty of compact open subgroups.
\begin{remark}
\label{rem:quotient-semi-comm}
When $H$ is a normal compact open subgroup of a locally compact group $G$, the discrete Hecke pair $\gh$ is always locally commutative, and therefore $G$ must be unimodular. On the other hand, in this case $\H\gh$ is isomorphic to the complex group algebra of the quotient group $G/H$, so it can be a noncommutative algebra. This is why locally commutative Hecke pairs is more general than Gelfand pairs. In fact, by Proposition \ref{prop:communimod}, all discrete Hecke pairs described in Proposition \ref{prop:alg-relative-unimod} below are locally commutative.
\end{remark}
In the following we describe two more algebraic conditions implying the relative unimodularity. They come from the work of G.M. Bergman and H.W. Lenstra in \cite{berlenstra}. First we need some definitions:

\begin{definition}
\label{def:commen-nearly}
\begin{itemize}
\item [(i)] Two subgroups $H$ and $K$ of a group $G$ are called {\it commensurable} if $H\cap K$ is a finite index subgroup of both $H$ and $K$.
\item [(ii)] A subgroup $H$ of a group $G$ is called {\it nearly normal} if it is commensurable with a normal subgroup $N$ of $G$.
\item [(iii)] A subgroup $H$ of a group is called {\it almost normal} if it has only finitely many conjugates.
\end{itemize}
\end{definition}
Condition (iii) in the above definition should not be confused with the definition of almost normal subgroups according to \cite{bc}, see Definition \ref{def:discretehp}(ii) and its footnote. One also observes that $H$ is an almost normal subgroup of $G$ if and only if the normalizer of $H$ in $G$, is a finite index subgroup of $G$.
\begin{proposition}
\label{prop:alg-relative-unimod}
A discrete Hecke pair $\gh$ is relatively unimodular if one of the following conditions holds:
\begin{itemize}
\item [(i)] The subgroup $H$ is nearly normal in $G$.
\item [(ii)] The subgroup $H$ is almost normal in $G$.
\end{itemize}
In particular, if a locally compact group $G$ possesses an open Hecke subgroup $H$ which is also nearly normal or almost normal in $G$, then $G$ is unimodular.
\end{proposition}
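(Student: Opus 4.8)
The plan is to handle (i) and (ii) separately; in each case I would determine enough about the homomorphism $\D_\gh\colon G\to\q^+$ and then exploit that $\q^+$ is torsion-free. The common starting point is the remark, valid for every discrete Hecke pair, that $\D_\gh$ is given by a generalized index. Indeed $L(x)=[H:H\cap xHx\inv]$, while conjugating $R(x)=[H:H\cap x\inv Hx]$ by $x$ rewrites it as $[xHx\inv:H\cap xHx\inv]$; the Hecke condition makes both quantities finite, so $H$ and $xHx\inv$ are commensurable and
\[
\D_\gh(x)=\frac{[H:H\cap xHx\inv]}{[xHx\inv:H\cap xHx\inv]}=:\langle H:xHx\inv\rangle .
\]
I would then record, by routine coset counting, the two properties of the generalized index $\langle A:B\rangle\in\q^+$ of commensurable subgroups of $G$: multiplicativity along pairwise commensurable triples, $\langle A:B\rangle\langle B:C\rangle=\langle A:C\rangle$ (so in particular $\langle A:B\rangle\langle B:A\rangle=1$), and invariance under automorphisms of $G$, so $\langle gAg\inv:gBg\inv\rangle=\langle A:B\rangle$ for all $g\in G$.

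For (i), suppose $H$ is commensurable with a normal subgroup $N\le G$ and fix $x\in G$. Conjugation by $x$ is an automorphism of $G$ carrying $N$ onto itself, so $xHx\inv$ is commensurable with $xNx\inv=N$; hence $H$, $N$ and $xHx\inv$ are pairwise commensurable. Telescoping through $N$ and then using conjugation-invariance,
\[
\D_\gh(x)=\langle H:xHx\inv\rangle=\langle H:N\rangle\,\langle N:xHx\inv\rangle=\langle H:N\rangle\,\langle xNx\inv:xHx\inv\rangle=\langle H:N\rangle\,\langle N:H\rangle=1,
\]
so $\gh$ is relatively unimodular.

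For (ii), suppose $H$ has only finitely many conjugates, i.e. its normalizer $N_G(H)$ has finite index in $G$. For $x\in N_G(H)$ we have $xHx\inv=H$, hence $H_x=H$ and $L(x)=R(x)=1$, so $\D_\gh(x)=1$; thus $N_G(H)\subseteq\ker\D_\gh$. Since $\D_\gh$ is a homomorphism, $\ker\D_\gh$ is a subgroup of $G$ containing a finite-index subgroup, hence is itself of finite index, so $\D_\gh(G)\cong G/\ker\D_\gh$ is a finite subgroup of the torsion-free group $\q^+$, therefore trivial. For the final assertion: if the compact open subgroup $H$ of a locally compact group $G$ is also almost normal, then $\gh$ is a discrete Hecke pair (each double coset $HxH$ is compact and open, hence a finite union of left cosets of $H$), part (ii) gives $\D_\gh\equiv1$, and Equality (\ref{eqn:relativeequality}) identifies $\D_\gh$ with $\D_G$, whence $G$ is unimodular.

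The one point that needs care is the identification $\D_\gh(x)=\langle H:xHx\inv\rangle$ together with the multiplicativity of the generalized index: this is what exhibits $\D_\gh$ as a genuine commensurability invariant, which is exactly the leverage needed in (i) so that the telescoped product collapses once $N$ is normal. After that step, part (ii) and the corollary reduce to the torsion-freeness of $\q^+$ and are routine.
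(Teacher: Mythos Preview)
Your proof is correct, but the route differs from the paper's in both parts. For (i) the paper invokes Theorem~3 of \cite{berlenstra} (and the related result of Schlichting) to conclude that nearly normal is equivalent to the existence of a uniform bound $L(g)\le n$; then $\D_\gh(g)\le n$ for all $g$, and a bounded homomorphism into $\q^+$ must be trivial. Your argument is more elementary and self-contained: you identify $\D_\gh(x)$ with the generalized index $\langle H:xHx\inv\rangle$, telescope through the normal subgroup $N$, and use conjugation-invariance to collapse the product directly to $1$, with no need for the Bergman--Lenstra bound. For (ii) the paper observes that an almost normal Hecke subgroup is automatically nearly normal (the finitely many conjugates of $H$ are pairwise commensurable, so their intersection is a normal subgroup commensurable with $H$) and then appeals to (i); you instead argue directly that $N_G(H)\subseteq\ker\D_\gh$, forcing $\D_\gh(G)$ to be a finite subgroup of the torsion-free group $\q^+$. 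Your treatment of the final assertion via Equality~(\ref{eqn:relativeequality}) matches the paper's intent. The paper's route gives extra structural information (the uniform bound on $L$) at the cost of an external citation; yours is shorter and avoids that dependence.
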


\begin{proof} Since the notions defined in Definition \ref{def:commen-nearly} are algebraic, we do not have to worry about the topology in the following arguments:
\begin{itemize}
\item [(i)] According to Theorem 3 of \cite{berlenstra}, $H$ is nearly normal if and only if there exists a natural number $n$ such that $1\leq L(g)\leq n$ for all $g\in G$. This statement also follows from the work of G. Schlichting in \cite{sch2}. Thus $\D_\gh(g)\leq n$ for all $g\in G$. On the other hand, since $\D_\gh$ is a group homomorphism from $G$ into the multiplicative group $\q^+$, the boundedness of its image amounts to $\D_\gh=1$.
\item [(ii)] It follows from (i) and the fact that if $H$ is an almost normal and a Hecke subgroup of $G$, then it is nearly normal.
\end{itemize}
\end{proof}

In \cite{neu}, B. H. Neumann determined the class of all finitely generated groups all whose subgroups are nearly normal, see also Theorem 2.8 of \cite{s2} for a summary. Also, in Example 2.10 of \cite{s2}, we explained a method to construct nearly normal subgroups of free product of two discrete groups. In its simplest form, it is based on the fact that every finite subgroup of a finitely generated group gives rise to a nearly normal subgroup of a finitely generated free group.

The above discussion suggests a similar study of the unimodularity of locally compact groups. Regarding Assumption \ref{assume:H-unimod}, for a given non-discrete Hecke pair $\gh$, we are interested in cases where both $G$ and $H$ are unimodular.
\begin{remark}
\label{rem:unimod} Besides Corollary \ref{cor:LCG-unimod1} and elementary cases where $H$ is either a discrete, normal, or compact subgroup of a unimodular group $G$, see Corollary 1.5.5 of \cite{dit-ech} and Proposition 2.27 and Corollary 2.28 of \cite{folland-ha}, we have the simultaneous unimodularity of $G$ and $H$ in the following cases:
\begin{itemize}
\item[(i)] When $H$ is a lattice in $G$. A discrete subgroup $H$ of a locally compact group $G$ is called a {\it lattice in $G$} if the homogeneous space $\hg$ has a $G$-invariant Radon measure $\nu$ such that $\nu(\hg)<\infty$. In this case, by Theorem 9.1.6 of \cite{dit-ech}, $G$ is unimodular too.
\item [(ii)] When $H$ is a unimodular and cocompact subgroup of a locally compact group $G$. In this case, by Proposition 9.1.2 of \cite{dit-ech}, $G$ is unimodular too.
\item [(iii)] When $G$ is unimodular and $H$ is a nearly normal closed subgroup of $G$. Let $N$ be a normal subgroup of $G$ commensurable with $H$, which is also closed. Then the statement follows from Lemma \ref{lem:unimodcomm} below.
\end{itemize}
\end{remark}

\begin{lemma}
\label{lem:unimodcomm}
Let $H$ and $K$ be two commensurable closed subgroups of a locally compact group $G$. Then $H$ is unimodular if and only if $K$ is unimodular.
\end{lemma}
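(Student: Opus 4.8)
The plan is to transfer the whole question to the intersection $L:=H\cap K$, which by commensurability (Definition \ref{def:commen-nearly}(i)) is a closed subgroup of finite index in each of $H$ and $K$. Since a closed subgroup of finite index is automatically open — its complement is a finite union of closed cosets, hence closed — $L$ is an \emph{open} subgroup of both $H$ and $K$. Both $H$ and $K$ are locally compact, being closed subgroups of $G$, so each carries a right Haar measure and a modular function.

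The first ingredient I would record is the standard fact that if $L$ is an open subgroup of a locally compact group $M$, then a right Haar measure on $M$ restricts to a right Haar measure on $L$ (right-invariance is immediate, compact subsets of $L$ are compact in $M$, and nonempty open subsets of $L$ are open in $M$), and consequently $\Delta_M|_L=\Delta_L$, because the defining relation for the modular function is the same computation read inside $L$; this is elementary and can be cited from \cite{folland-ha}. Applying it to $M=H$ and to $M=K$ gives
\[
\Delta_H|_L \;=\; \Delta_L \;=\; \Delta_K|_L .
\]

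To finish, I would argue as follows. Suppose $H$ is unimodular, i.e. $\Delta_H\equiv 1$; then the displayed identity forces $\Delta_L\equiv 1$, so $L$ is unimodular. It then remains to upgrade unimodularity of $L$ to unimodularity of $K$: the map $\Delta_K$ is a continuous homomorphism from $K$ into the multiplicative group $\r^+$ whose kernel contains $L$, and since $[K:L]<\infty$ the image $\Delta_K(K)$ is a finite subgroup of $\r^+$; but $\r^+$ has no nontrivial finite subgroup, so $\Delta_K\equiv 1$ and $K$ is unimodular. Exchanging the roles of $H$ and $K$ gives the converse, establishing the equivalence.

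I do not expect a genuine obstacle here. The two points that merit a sentence of justification are the restriction formula $\Delta_M|_L=\Delta_L$ for an open subgroup and the triviality of finite subgroups of $\r^+$; once these are in hand the lemma follows formally. The only mild subtlety worth flagging is that one must pass to the closed intersection $H\cap K$ before invoking these facts, so that the hypothesis ``open subgroup'' — rather than merely ``finite-index subgroup'' — is actually available.
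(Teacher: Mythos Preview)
Your argument is correct. You and the paper both begin by reducing to the case of a finite-index closed (hence open) subgroup via $L=H\cap K$, but from there the routes diverge. The paper replaces $L$ by its normal core $N=\bigcap_{h\in H} hLh^{-1}$ in $H$ and then invokes the facts recorded in Remark~\ref{rem:unimod} (a normal closed subgroup of a unimodular group is unimodular, and a unimodular cocompact subgroup forces the ambient group to be unimodular, citing \cite{dit-ech}). Your approach instead stays with $L$ itself, uses the restriction identity $\Delta_M|_L=\Delta_L$ for an open subgroup, and finishes with the observation that $\Delta_K$ factors through the finite quotient $K/L$ and hence has finite, therefore trivial, image in $\r^+$. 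Your route is slightly more elementary and self-contained---no passage to the normal core and no external citation---while the paper's route has the virtue of reusing facts already catalogued in Remark~\ref{rem:unimod}. Both the ``finite image in $\r^+$'' trick and the ``open subgroups inherit the modular function'' fact are standard, so neither approach carries a real advantage in depth; yours is simply a bit more direct.
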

\begin{proof}
Without loss of generality, we can assume that $K$ is a finite index subgroup of $H$. By replacing $K$ with $N:=\bigcap_{h\in H} hKh\inv $, we can even assume $K$ is normal. Then the statement follows from Proposition 9.1.2 and Corollary 1.5.5 of \cite{dit-ech}.
\end{proof}

\section{When $H$ is a compact subgroup of $G$}
\label{sec:Hcompact}

In this section we show that when $H$ is a compact subgroup of a locally compact group $G$, the pair $\gh$ is a Hecke pair. The special case where $H$ is a compact and open subgroup of $G$ will be treated with more details in the next section, because it is better studied within the class of Hecke pairs with open Hecke subgroups.

Besides compactness of $H$, we also assume that the triple $(\eta, \mu, \nu)$ of measures satisfies Weil's formula, Equation (\ref{eqn:invmeasure}). We define a map
\begin{equation}
\label{eqn:iota}
\iota:C_c(\hg ) \to C_c(G),
\end{equation}
\[
\iota(f)(x)= \tilde{f}(x) :=f(Hx), \qquad \forall f\in C_c(\hg), \,x\in G.
\]
Since $H$ is compact, this map is well defined. We also consider its extensions from $L^2(\hg )$ into $L^2(G)$ and from $L^1(\hg )$ into $L^1(G)$, and denote them with the same notation. Since $\tilde{f}$ is left $H$-invariant, one checks that
\[
\int_H |\tilde{f} (hx)|^2 d\eta(h)= |\tilde{f} (x)|^2\int_H d\eta(h)= \eta(H) |f(Hx)|^2, \quad \forall x\in G.
\]
Thus it follows from Weil's formula that $\|\tilde{f}\|_2^2= \eta(H) \|f\|_2^2$ for all $f\in L^2(\hg)$ where the $L^2$-norms of $\tilde{f}$ and $f$ are computed in $L^2(G)$ and $L^2(\hg)$, respectively. Similarly, we have $\|\tilde{f}\|_1=\eta(H)\|f\|_1$. One also notes that for every $f\in \H\gh$, $\tilde{f}$ is a bi-$H$-invariant function on $G$, and so $f(Hx)=\tilde{f}(xh)$ for all $x\in G$ and $h\in H$. Therefore for every $f\in \H\gh$, $g\in L^2(\hg)$ and $x\in G$, one computes
\begin{eqnarray*}
\widetilde{f\ast g}(x)&=&f\ast g(Hx)= \int_\hg f(Hxy\inv) g(Hy) d\nu(y)\\
&=&\frac{1}{\eta(H)} \int_\hg \tilde{f}(xy\inv) \tilde{g}(y)\left( \int_H d\eta(h) \right) d\nu(y)\\
&=& \frac{1}{\eta(H)}\int_\hg \left( \int_H \tilde{f}(xy\inv h\inv) \tilde{g}(hy) d\eta(h)\right)  d\nu(y)\\
&=& \frac{1}{\eta(H)}\int_G \tilde{f}(xy\inv) \tilde{g}(y) d\mu(y)\\
&=& \frac{1}{\eta(H)} \tilde{f}\ast \tilde{g}(x).
\end{eqnarray*}
Hence
\begin{eqnarray*}
\|f \ast g \|_2^2&=& \frac{1}{\eta(H)} \| \widetilde{f\ast g} \|_2^2=  \frac{1}{\eta(H)^3} \| \tilde{f}\ast \tilde{g} \|_2^2\\
&\leq& \frac{1}{\eta(H)^3} \|\tilde{f}\|^2_1 \|\tilde{g}\|_2^2 = \|f\|^2_1 \|g\|_2^2.
\end{eqnarray*}
We summarize the above computations in the following theorem:
\begin{theorem}
\label{thm:Hcompact} Let $H$ be a compact subgroup of a locally compact group $G$. Then the pair $\gh$ is a Hecke pair. Furthermore, the left regular representation $\la:\H\gh \to B(L^2(\hg))$ is bounded and we have $\|\la(f)\|\leq \|f\|_1$.
\end{theorem}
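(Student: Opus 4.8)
The plan is to push the whole problem onto ordinary convolution on the group $G$ by means of the lifting map $\iota$, and then read off boundedness from Young's inequality in $L^2(G)$. First I would check that the pair $\gh$ satisfies the standing Assumptions \ref{assume:r-G-inv} and \ref{assume:H-unimod}: since $H$ is compact it is unimodular, and $\D_G|_H$ is a continuous homomorphism from $H$ into $\r^+$ whose image is a compact subgroup of $\r^+$, hence trivial; thus $\D_G|_H=\D_H=1$, and by (\ref{eqn:invariantmeasure}) there is a right $G$-invariant Radon measure $\nu$ on $\hg$. Fix $\mu$, $\eta$, $\nu$ satisfying Weil's formula (\ref{eqn:invmeasure}).

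Next I would introduce $\iota:C_c(\hg)\to C_c(G)$ by $\iota(f)(x)=\tilde f(x):=f(Hx)$; this is well defined and lands in $C_c(G)$ because $H$ is compact, so the preimage under $\pi:G\to\hg$ of a compact set is compact. Feeding the left $H$-invariant functions $|\tilde f|^2$ and $|\tilde f|$ into Weil's formula gives the scaling identities $\|\tilde f\|_2^2=\eta(H)\|f\|_2^2$ and $\|\tilde f\|_1=\eta(H)\|f\|_1$, so $\iota$ extends to bounded maps $L^2(\hg)\to L^2(G)$ and $L^1(\hg)\to L^1(G)$. The key --- and most computational --- step is the convolution identity: for $f\in\H\gh$ and $g\in L^2(\hg)$,
\[
\widetilde{f\ast g}=\frac{1}{\eta(H)}\,\tilde f\ast\tilde g,
\]
the right-hand convolution being taken on $G$. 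For this I would use that $\tilde f$ is bi-$H$-invariant when $f\in\H\gh$: insert the factor $\int_H d\eta(h)=\eta(H)$ into $f\ast g(Hx)$, use left $H$-invariance of $\tilde g$ and right $H$-invariance of $\tilde f$ to replace the integrand by $\tilde f(xy\inv h\inv)\tilde g(hy)$, and collapse the resulting double integral $\int_{\hg}\int_H$ into $\int_G$ via Weil's formula. This unfolding is the part that needs care, but it is essentially a single application of (\ref{eqn:invmeasure}) in reverse.

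Finally, combining the scaling identities with Young's inequality $\|\tilde f\ast\tilde g\|_2\le\|\tilde f\|_1\|\tilde g\|_2$ on $L^2(G)$ yields
\[
\|f\ast g\|_2^2=\frac{1}{\eta(H)}\|\widetilde{f\ast g}\|_2^2=\frac{1}{\eta(H)^3}\|\tilde f\ast\tilde g\|_2^2\le\frac{1}{\eta(H)^3}\|\tilde f\|_1^2\|\tilde g\|_2^2=\|f\|_1^2\|g\|_2^2,
\]
so $\la(f)$ extends to a bounded operator on $L^2(\hg)$ with $\|\la(f)\|\le\|f\|_1$. This establishes Condition (a) of Definition \ref{def:heckepair-allcases}(i): $\la$ is then a homomorphism by the standard Fubini argument and is clearly injective; Condition (b) holds automatically for compact $H$ by Remark \ref{rem:condition-b}(iii). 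Hence $\gh$ is a Hecke pair, with the stated norm bound for its left regular representation.
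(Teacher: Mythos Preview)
Your proposal is correct and follows essentially the same route as the paper: lift via $\iota$ to $C_c(G)$, establish the scaling identities $\|\tilde f\|_2^2=\eta(H)\|f\|_2^2$ and $\|\tilde f\|_1=\eta(H)\|f\|_1$ from Weil's formula, derive the convolution identity $\widetilde{f\ast g}=\eta(H)^{-1}\tilde f\ast\tilde g$ by inserting $\int_H d\eta(h)$ and unfolding via (\ref{eqn:invmeasure}), and then conclude with the $L^1$--$L^2$ convolution bound on $G$. The only additions you make beyond the paper's own argument are the explicit verification of Assumptions \ref{assume:r-G-inv} and \ref{assume:H-unimod} and the appeal to Remark \ref{rem:condition-b}(iii) for Condition (b), both of which are appropriate.
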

One notes that the above theorem is valid particularly when $H$ is a compact open subgroup of $G$, whether the Hecke pair $\gh$ is considered as a discrete or non-discrete Hecke pair. In fact, this is where two settings of locally compact Hecke pairs and discrete Hecke pairs coincide with each other, and it allows us to transfer many results and ideas from one setting to another.
\begin{remark}
\label{rem:HcompactL1}
When $\gh$ is as the above theorem, we define $L^1(G//H)$ as the involutive Banach algebra consisting of those functions in $L^1(\hg)$ that are almost everywhere right $H$-invariant. Then by the continuity of $\la$, we can extend the left regular representation to whole $L^1(G//H)$.
\end{remark}

Examples of pairs $\gh$ of locally compact groups subject to the above theorem have already studied in other branches of mathematics such as noncommutative harmonic analysis, representations theory, Lie theory, geometric group theory and random walks. Therefore not only we have no shortage of examples for these Hecke pairs, but we also have plenty of opportunities to find new notable applications for our extended \cs-algebraic formulation of Hecke algebras. There is an important class of Hecke pairs of the type described in the above theorem. This class consists of the Schlichting completions of reduced discrete Hecke pairs $\gh$, which will be addressed in the following section.

\section{When $H$ is an open subgroup of $G$}
\label{sec:Hopen}

In this section $\gh$ is always a discrete Hecke pair. Using the Schlichting completion of $\gh$, we reduce the study of this type of Hecke pairs to the case that the subgroup in the Hecke pair is compact and open. Then we apply the results obtained in previous sections to discrete Hecke pairs.

\begin{definition}
\label{def:reducedHP}
Given a discrete Hecke pair $\gh$, the normal subgroup of $G$ defined by $K_\gh:=\bigcap_{x\in G} xHx\inv $ is called the {\it core of the Hecke pair $\gh$}. The Hecke pair $\gh$ is called {\it reduced} if its core is the trivial subgroup. The pair $(\frac{G}{K_\gh},\frac{H}{K_\gh})$ is a reduced discrete Hecke pair which is called the {\it reduction of $\gh$} or the {\it reduced Hecke pair associated with $\gh$} and is denoted by $\ghr$.
\end{definition}

The reduced Hecke pair $\ghr$ associated with $\gh$ has a similar properties as $\gh$ in a number of situations, for instance see Sections 3 and 4 of \cite{s4}. In fact, it follows from Proposition \ref{prop:normalcomponent} that the Hecke algebras of Hecke pairs $\gh$ and $\ghr$ are naturally isomorphic. Therefore we often assume that discrete Hecke pairs are reduced.

The Schlichting completion is a process to associate a totally disconnected locally compact group $\gb$ with a given reduced discrete Hecke pair $\gh$ such that $G$ is dense in $\gb$, and more importantly, the closure of $H$ in $\gb$, denoted by $\hb$, is a compact open subgroup of $\gb$. Then the discrete Hecke pair $\ghb$ is called the {\it Schlichting completion of $\gh$}. We refer the reader to \cite{klq} for a detailed account of the Schlichting completion. The following examples explains some aspects of the subject.
\begin{example}
\label{exa:Schlichting} Let $\gh$ be a discrete Hecke pair.
\begin{itemize}
\item [(i)] Assume that $G$ is a discrete group and $\gh$ is a reduced Hecke pair. Then it follows that the Schlichting completion of $\gh$ is the same as $\gh$ if and only if $H$ is a finite group.
\item [(ii)] Let $H$ be a nearly normal subgroup of $G$. If $H$ is finitely generated, then the subgroup $H_r$ in the reduced Hecke pair $\ghr$ associated to $\gh$ is finite, and consequently we have $(\gb_r, \hb_r)=\ghr$. To prove this, it is enough to show that $H$ has a finite index subgroup $L$ which is normal in $G$. Let $N$ be a normal subgroup of $G$ which is commensurable with $H$. Since $H$ is finitely generated, $N$ is finitely generated too, and therefore there are only finitely many subgroups of index equal to $[N:H\cap N]$ in $N$. Set
    \[
    L:=\cap_{x\in G} x(H\cap N)x\inv.
    \]
    Then $L$ is finite index in $H$ and normal in $G$. This proves the above claim.

    When $H$ is not finitely generated, the Hecke pair $\gh$ can be reduced and its Schlichting completion is different than itself. For example, let $N$ be an infinite product of $\z/2$ and let $A$ be the full automorphism group of $N$. Set $G:=N\rtimes A$ and let $H$ be a subgroup of $N$ of index 2. Then since $A$ acts transitively on nontrivial elements of $N$, $H$ does not contain any non-trivial normal subgroup of $G$.\footnote{We thank Henry Wilton and Jeremy Rickard for helpful conversations about these points in mathoverflow.net.}
    
    One notes that when $H$ is a finite index subgroup of $G$, the condition of $H$ being finitely generated is unnecessary. Because, in any case, the subgroup $K_\gh$ is a finite index subgroup of $H$. Thus one gets $(\gb_r, \hb_r)=\ghr$.
\item [(iii)] Let $\gh$ be a the Bost-Connes Hecke pair discussed in Example \ref{exa:involutionnot}. One checks that it is a reduced Hecke pair and it is shown in Example 11.4 of \cite{klq} that its Schlichting completion $\ghb$ is as follows:
    \[
    \gb=\left\{ \left( \begin{array} {rr}1&b\\0&a \end{array}\right); a\in \q^+, b\in \mathcal{A} \right\}, \quad \text{and} \quad \hb=\left\{ \left( \begin{array} {rr}1& r\\0&1
\end{array}\right); r\in \mathcal{Z} \right\},
    \]
    where $\mathcal{A}$ and $\mathcal{Z}$ are the ring of finite adeles on $\q$ and its the maximal compact subring, respectively. Moreover, the topology of the above Schlichting completion coincides with the topology coming from the topology of $\mathcal{A}$, i.e. the restricted product topology of $p$-adic fields.
\item [(iv)] Given a prime number $p$, consider the Hecke pair $(SL_2(\z [1/p]), SL_2(\z))$. By looking at intersections of the form $SL_2(\z) \bigcap x_n SL_2(\z)x_n\inv$, where $x_n=\left( \begin{array} {rr}0&p^{-n}\\-p^n&0 \end{array}\right)$ for $n\in \n$, one easily observes that $K_{(SL_2(\z [1/p]), SL_2(\z))}=\{I, -I\}$, where $I$ is the $2\times 2$ identity matrix. Therefore the pair $(PSL_2(\z [1/p]), PSL_2(\z))$ is the reduction of the Hecke pair $(SL_2(\z [1/p]), SL_2(\z))$. Then it is shown in Example 11.8 of \cite{klq} that the Schlitching completion of $(PSL_2(\z [1/p]), PSL_2(\z))$ is $(PSL_2(\q_p), PSL_2(\z_p))$, which is the reduction of the Hecke pair appearing in Example \ref{exa:scommut}(i). Similar discussion shows that $(PSL_2(\mathcal{A}), PSL_2(\mathcal{Z}))$ is the Schlichting completion of $(PSL_2(\q), PSL_2(\z))$. One notes that the Hecke topology of these Schlichting completions again coincide with the conventional totally disconnected locally compact topologies of the above groups coming from $p$-adic valuations and the restricted product of $p$-adic fields.
\end{itemize}
\end{example}

The following lemma simply states that passing to the Schlichting completion does not change the algebraic and analytic aspects of reduced discrete Hecke pairs and their associated Hecke algebras.
\begin{lemma}
\label{lem:bijective-Schlich} (\cite{klq}, Proposition 4.9) Let $\gh$ be a reduced discrete Hecke pair and let $\ghb$ be its Schlichting completion. Then the following statements hold:
\begin{itemize}
\item[(i)] The mapping $\alpha: \hg\to \hb\ba\gb$ (resp. $\alpha':G/H\to \gb / \hb$), defined by $Hg\mapsto \hb g$ (resp. $gH\mapsto g\hb$) for all $g\in G$ is a $G$-equivariant bijection. In particular, $\alpha$ induces an isometric isomorphism between Hilbert spaces $\ell^2(\hg)$ and $\ell^2(\hb \ba \gb)$.
\item[(ii)] The mapping $\beta: G//H \to \gb// \hb$, defined by $HgH\mapsto \hb g\hb$ for all $g\in G$ is a bijection.
\item[(iii)] The mapping $\beta$ commutes with the convolution product, and therefore it induces an isometric isomorphism between the Hecke algebras $\H\gh$ and $\H\ghb$, with respect to the corresponding $\ell^1$-norms.
\end{itemize}
\end{lemma}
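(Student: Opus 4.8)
The plan is to reduce all three parts to two facts about the Schlichting completion $\gb$ of $\gh$: that $\hb\cap G=H$, and that $\hb g\hb\cap G=HgH$ for every $g\in G$. The first is a standard feature of the construction (in the Hecke topology on $G$ the subgroup $H$ is open, so any $g\in G\setminus H$ has the $\tau$-open neighbourhood $gH$ disjoint from $H$, whence $g\notin\overline H=\hb$); it also shows, together with the openness of $\hb$ in $\gb$, that $\hb\ba\gb$ and $\gb/\hb$ are discrete. For the second, the inclusion $HgH\subseteq\hb g\hb\cap G$ is clear; conversely, if $x=ugv\in\hb g\hb$ with $u,v\in\hb$ and $x\in G$, then $\hb x=\hb gv$, so $gv$ lies in the open coset $\hb x$; since $H$ is dense in $\hb$ we get $gk\in\hb x$ for some $k\in H$, i.e. $\hb gk=\hb x$, so $x(gk)\inv\in\hb\cap G=H$ and hence $x\in Hgk\subseteq HgH$. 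I would either cite these two facts from \cite{klq} or record this short argument.

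Granting this, part (i) is bookkeeping. The map $\alpha$ is well defined since $H\subseteq\hb$; injective since $\hb g=\hb g'$ with $g,g'\in G$ forces $g'g\inv\in\hb\cap G=H$; surjective since $\hb$ is open and $G$ dense, so every coset $\hb x$ meets $G$; and $G$-equivariant directly from $\alpha(Hg\cdot h)=\hb gh=\alpha(Hg)\cdot h$. As $\hb\ba\gb$ is discrete, a bijection of the underlying sets $\hg\to\hb\ba\gb$ induces an isometric isomorphism $\ell^2(\hg)\to\ell^2(\hb\ba\gb)$; the statement for $\alpha'$ is the symmetric one.

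For part (ii): $\beta$ is well defined because $HgH=Hg'H$ gives $g'\in HgH\subseteq\hb g\hb$, hence $\hb g'\hb=\hb g\hb$; injective because $\hb g\hb=\hb g'\hb$ gives $HgH=\hb g\hb\cap G=\hb g'\hb\cap G=Hg'H$; and surjective because every double coset $\hb x\hb$ is a union of cosets $\hb y$, hence, being open, meets the dense subgroup $G$. Moreover, for each $b\in G$, $\alpha$ restricts to a bijection
\[
\{Hy\in\hg:\ Hy\subseteq HbH\}\ \longrightarrow\ \{\hb\eta\in\hb\ba\gb:\ \hb\eta\subseteq\hb b\hb\},
\]
since $Hy\subseteq HbH\iff HyH=HbH\iff\hb y\hb=\hb b\hb\iff\hb y\subseteq\hb b\hb$ (using surjectivity of $\alpha$ to represent an arbitrary $\hb\eta$ by some $y\in G$); in particular $|H\ba HgH|=|\hb\ba\hb g\hb|$ for all $g\in G$.

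Finally, for part (iii), let $\beta_\ast\colon\H\gh\to\H\ghb$ be the linear map $\chi_{HgH}\mapsto\chi_{\hb g\hb}$. For $a,b,x\in G$, formula (\ref{eqn:Hcon1}) gives $(\chi_{HaH}\ast\chi_{HbH})(HxH)=\#\{Hy\in\hg:\ Hy\subseteq HbH,\ xy\inv\in HaH\}$, and similarly the $\ghb$-convolution of $\chi_{\hb a\hb}$ and $\chi_{\hb b\hb}$ at $\hb x\hb$ counts those $\hb y\in\hb\ba\gb$ with $\hb y\subseteq\hb b\hb$ and $xy\inv\in\hb a\hb$ (the summand indexed by $\hb y$ depends only on that coset). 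Under $\alpha$, representing $\hb y$ by some $y\in G$, one has $Hy\subseteq HbH\iff\hb y\subseteq\hb b\hb$ by the previous paragraph, and $xy\inv\in HaH\iff H(xy\inv)H=HaH\iff\hb(xy\inv)\hb=\hb a\hb\iff xy\inv\in\hb a\hb$, using $\hb a\hb\cap G=HaH$ and $xy\inv\in G$; hence $\alpha$ identifies the two counting sets, so the structure constants of $\H\gh$ and $\H\ghb$ in the bases $\{\chi_{HgH}\}$, $\{\chi_{\hb g\hb}\}$ coincide. Thus $\beta_\ast$ is an algebra homomorphism, bijective because $\beta$ permutes the basis double cosets; and it is an $\ell^1$-isometry because the basis elements have pairwise disjoint supports and $\|\chi_{HgH}\|_1=|H\ba HgH|=|\hb\ba\hb g\hb|=\|\chi_{\hb g\hb}\|_1$. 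The one genuinely substantive step is the identity $\hb g\hb\cap G=HgH$: it is where the openness of $\hb$ (hence discreteness of $\hb\ba\gb$) and the density of $H$ in $\hb$ are both used, and it is what makes the convolution structure constants transfer verbatim; everything else is coset arithmetic.
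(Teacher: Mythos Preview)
The paper does not supply a proof of this lemma at all: it is simply quoted from \cite{klq}, Proposition 4.9, and then used as a black box in the proof of Theorem \ref{thm:lambdabounded-dis}. So there is no ``paper's own proof'' to compare against; your proposal is filling in what the paper outsources.

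Your argument is correct and is essentially the standard one. The reduction to the two facts $\hb\cap G=H$ and $\hb g\hb\cap G=HgH$ is exactly right, and once these are in hand the remaining verifications in (i)--(iii) are, as you say, coset bookkeeping. Your proof of the second fact from the first is clean and complete. The only place I would tighten is your sketch for $\hb\cap G=H$: the sentence ``$g\in G\setminus H$ has the $\tau$-open neighbourhood $gH$ disjoint from $H$, whence $g\notin\overline H$'' implicitly uses that the $\tau$-open set $gH$ in $G$ is the trace of an open set in $\gb$ that still misses $H$, which is not quite immediate from the completion description alone. The most direct route is via the concrete realization of $\gb$ in \cite{klq} (or \cite{tzanev}) as the closure of the image of $G$ in the permutation group of $G/H$, where $\hb$ is the stabilizer of the base coset; then $g\in G$ lies in $\hb$ iff $gH=H$ iff $g\in H$. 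Since you already say you would cite \cite{klq} for this, that is fine. Everything else --- the matching of structure constants via your counting argument, and the $\ell^1$-isometry via $|H\backslash HgH|=|\hb\backslash\hb g\hb|$ --- is correct as written.
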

The following theorem is an important application of the Schlichting completion and Theorem \ref{thm:Hcompact}:

\begin{theorem}
\label{thm:lambdabounded-dis} Let $\gh$ be a discrete Hecke pair. Then the left regular representation $\la:\H\gh\to B(\ell^2(\hg))$ is bounded. In fact, we have $\|\la(f)\|\leq \|f\|_1$ for all $f\in \H\gh$.
\end{theorem}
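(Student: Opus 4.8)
The plan is to reduce, by way of the reduction and then the Schlichting completion of $\gh$, to the situation already settled in Theorem \ref{thm:Hcompact}. Note first that $\H\gh$ and the representation $\la$ of (\ref{eqn:lregrep-dis}) depend only on $(G,H)$ as an abstract group with a Hecke subgroup (everything in sight is counted over cosets), so I may and do treat $\gh$ as a discrete Hecke pair with $G$ discrete.

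\emph{Step 1: pass to the reduction.} Take $N=K_\gh$ to be the core of $\gh$ and $\ghr$ the associated reduced discrete Hecke pair. Proposition \ref{prop:normalcomponent}(iii)--(iv) furnishes a Hecke algebra isomorphism $\ff_\H:\H\gh\to\H\ghr$ that is isometric for the respective $\ell^1$-norms, together with an isometric isomorphism $V:\ell^2(\hg)\to\ell^2(H_r\ba G_r)$ satisfying $V\la_\gh(f)=\la_\ghr(\ff_\H f)V$ for all $f\in\H\gh$; in particular $\|\la_\gh(f)\|=\|\la_\ghr(\ff_\H f)\|$ and $\|f\|_1=\|\ff_\H f\|_1$. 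So it suffices to prove the inequality for $\ghr$, which I relabel $\gh$.

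\emph{Step 2: pass to the Schlichting completion $\ghb$}, so that $\gb$ is a totally disconnected locally compact group and $\hb$ is a compact open subgroup of $\gb$. Since $\hb$ is open, $\hb\ba\gb$ is discrete; normalizing the right $\gb$-invariant Radon measure $\nu$ on it to be counting measure gives $L^2(\hb\ba\gb,\nu)=\ell^2(\hb\ba\gb)$ and $L^1(\hb\ba\gb,\nu)=\ell^1(\hb\ba\gb)$, and then the left regular representation of $\H\ghb$ in the sense of Definition \ref{def:heckepair-allcases} is literally the discrete one of (\ref{eqn:lregrep-dis}) (see Remark \ref{rem:condition-b}(ii)). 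As $\hb$ is a compact subgroup of the locally compact group $\gb$, Theorem \ref{thm:Hcompact} applies to $\ghb$ and yields $\|\la_\ghb(g)\|\le\|g\|_1$ for every $g\in\H\ghb$.

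\emph{Step 3: transport the bound back.} By Lemma \ref{lem:bijective-Schlich}, the bijection $\beta:G//H\to\gb//\hb$ induces an $\ell^1$-isometric algebra isomorphism $\H\gh\cong\H\ghb$ (part (iii)), and the $G$-equivariant bijection $\alpha:\hg\to\hb\ba\gb$ induces a unitary $U:\ell^2(\hg)\to\ell^2(\hb\ba\gb)$ (part (i)). Because $\alpha$ is $G$-equivariant and $\beta$ commutes with convolution, comparing the defining convolution sums term by term gives $U\la_\gh(f)U\inv=\la_\ghb(\beta f)$ for all $f\in\H\gh$, whence $\|\la_\gh(f)\|=\|\la_\ghb(\beta f)\|\le\|\beta f\|_1=\|f\|_1$, as required. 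Essentially all the analytic content is imported from Theorem \ref{thm:Hcompact} and Proposition \ref{prop:normalcomponent}; the only thing that really needs care is the compatibility bookkeeping, namely that under the counting-measure normalization the non-discrete objects attached to $\ghb$ (the $L^1$-norm and the operator $\la$) coincide with the discrete ones, and that the isometries of Lemma \ref{lem:bijective-Schlich} genuinely conjugate $\la_\gh$ onto $\la_\ghb$. Everything else is routine.
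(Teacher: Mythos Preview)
Your proof is correct and follows essentially the same route as the paper: reduce to the reduced Hecke pair via Proposition~\ref{prop:normalcomponent}, pass to the Schlichting completion $\ghb$ via Lemma~\ref{lem:bijective-Schlich}, apply Theorem~\ref{thm:Hcompact} to the compact open subgroup $\hb$, and transport the bound back through the isometries. Your write-up is slightly more explicit about the intertwining $U\la_\gh(f)U\inv=\la_\ghb(\beta f)$ and the counting-measure normalization, but the underlying argument is the same.
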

\begin{proof}
By Proposition \ref{prop:normalcomponent}, without loss of generality, we can assume $\gh$ is a reduced discrete Hecke pair. Let $\alpha$ and $\beta$ be as Lemma \ref{lem:bijective-Schlich}. Since both Hecke pairs $\gh$ and $\ghb$ are discrete, the above mappings induce the following isometric isomorphisms:
\begin{eqnarray*}
\alpha\s:\ell^2(\hb \ba \gb) &\to& \ell^2(\hg),  \qquad \alpha\s(\xi):= \xi \circ \alpha, \quad \forall \xi \in \ell^2(\hb \ba \gb), \\
\beta\s:\H\ghb &\to& \H\gh, \qquad  \beta\s(f):=f\circ \beta, \quad \forall f\in \H\ghb,
\end{eqnarray*}
where the norm on Hecke algebras $\H\gh$ and $\H\ghb$ are the corresponding $\ell^1$-norms. One notes that a similar isometry as $\alpha\s$ exists from $\ell^1(\hb\ba \gb)$ onto $\ell^1(\hg)$. It is straightforward to check that this isometries commute with the convolution products defining the left regular representations of $\H\gh$ and $\H\ghb$. Since $\hb$ is compact, by applying Theorem \ref{thm:Hcompact} and the above discussion for every $f\in \H\gh$ and $\xi\in\ell^2(\hg)$, we have
\begin{eqnarray*}
\|f\ast \xi\|_2&=&\|\alpha\s(({\beta\s}\inv f)\ast({\alpha\s}\inv\xi))\|_2\\
&=&\|({\beta\s}\inv f)\ast({\alpha\s}\inv\xi)\|_2\\
&\leq &\|{\beta\s}\inv f\|_1 \|{\alpha\s}\inv\xi\|_2\\
&= &\|f\|_1 \|\xi\|_2.
\end{eqnarray*}
\end{proof}
It worths mentioning that the above theorem is stronger than Proposition 1.3.3 of \cite{curtis}, as it compares the operator norm of $\lambda(f)$ with $\|f\|_1$. This is important because it generalizes the same inequality in locally compact groups. Moreover, this result is a necessary step to show that Hecke pairs of polynomial growth possess property (RD), see \cite{s4}.

We conclude this section with another application of the Schlichting completion of Hecke pairs. Recently, there has been some interest about property (T) of Hecke pairs , see for instance \cite{anan, larsen-palma} and their references. We refer to \cite{larsen-palma} for definition of property (T) of pairs of topological groups and basic results concerning property (T) of Hecke pairs. Here we observe that relative unimodularity of discrete Hecke pairs is a necessary condition for property (T):
\begin{proposition}
\label{prop:PTunimod}
    If a discrete Hecke pair $\gh$ has property (T), then it is relatively unimodular.
\end{proposition}
\begin{proof}
    Let $\ghb$ be the Schlichting completion of $\gh$. By Theorem A.8 of \cite{larsen-palma}, $\gb$ has property (T). Now it follows from Corollary 1.3.6 of \cite{BHV} that $\gb$ is unimodular. Thus $\gh$ is relatively unimodular.
\end{proof}


\section{When $H$ is a cocompact subgroup of $G$}
\label{sec:Hcocompact}

In this section, we assume that $H$ is a unimodular closed subgroup of $G$ such that the homogeneous space $H\ba G$ has a finite relatively invariant measure $\nu$. Therefore by Corollary B.1.8 of \cite{BHV}, $G$ is unimodular and the measure $\nu$ on $H\ba G$ is right $G$-invariant, and so both Assumptions \ref{assume:r-G-inv} and \ref{assume:H-unimod} hold for the pair $\gh$.

\begin{theorem}
\label{thm:cocompact} Given a pair $\gh$ as above, the left regular representation $\la:\H\gh\to B(L^2(\hg))$ is a well defined $\ast$-homomorphism.
\end{theorem}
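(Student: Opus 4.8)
The plan is to show that for $f\in\H\gh$ the operator $\la(f)$ is bounded with $\|\la(f)\|\le\|f\|_1$, that $\la$ is an algebra homomorphism, and that it respects the involution; Condition (b) of Definition \ref{def:heckepair-allcases}(i) is immediate here since $\nu$ is finite and $G$-invariant, so constant functions lie in $\H\gh$ and are nonzero everywhere.

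First I would establish boundedness. Since $G$ is unimodular, $\nu$ is genuinely $G$-invariant (the character $\chi$ of a finite relatively invariant measure on $\hg$ is trivial when $H$ and $G$ are unimodular), and Weil's formula (\ref{eqn:invmeasure}) holds with all three measures honestly invariant. For $f\in\H\gh$ and $\xi\in L^2(\hg)$ I write, for $\nu$-almost every $Hx$,
\[
|f\ast\xi(Hx)|=\left|\int_\hg f(Hxy\inv)\xi(Hy)\,d\nu(Hy)\right|
\le \int_\hg |f(Hxy\inv)|^{1/2}\,|f(Hxy\inv)|^{1/2}|\xi(Hy)|\,d\nu(Hy),
\]
and apply Cauchy--Schwarz to split this as $\left(\int_\hg|f(Hxy\inv)|\,d\nu(Hy)\right)^{1/2}\left(\int_\hg|f(Hxy\inv)||\xi(Hy)|^2\,d\nu(Hy)\right)^{1/2}$. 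The change of variable $Hy\mapsto Hx y\inv$ is not literally well defined on $\hg$, but $\int_\hg |f(Hxy\inv)|\,d\nu(Hy)$ can be computed by lifting to $G$: using Weil's formula and the $G$-invariance of $\mu$ one gets $\int_\hg|f(Hxy\inv)|\,d\nu(Hy)=\|f\|_1$, independent of $x$. Squaring, integrating over $Hx$, and invoking Tonelli together with the invariance of $\nu$ once more to collapse the double integral yields $\|f\ast\xi\|_2^2\le\|f\|_1\cdot\|f\|_1\|\xi\|_2^2$, i.e. $\|\la(f)\|\le\|f\|_1$. (I expect this lifting-to-$G$ manoeuvre for the invariant integrals — the analogue of the computation preceding Theorem \ref{thm:Hcompact} — to be the main technical obstacle, since one must be careful that the integrand $Hy\mapsto f(Hxy\inv)$ really is a well-defined function on $\hg$, which it is because $f$ is bi-$H$-invariant and $x$ is fixed.)

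Next, $\la$ is multiplicative: $\la(f)\la(g)\xi=f\ast(g\ast\xi)=(f\ast g)\ast\xi=\la(f\ast g)\xi$, the associativity $f\ast(g\ast\xi)=(f\ast g)\ast\xi$ following from Fubini/Tonelli (now justified by the boundedness estimate and finiteness of relevant supports together with $L^2$-membership) exactly as in the classical group case. Injectivity is clear since $f\ast\delta$-type test functions detect $f$ (or directly: $\la(f)=0$ forces $f=0$ by evaluating on approximate units built from characteristic functions of small $H$-bi-invariant sets, using Condition (b)). This gives Condition (a), so $\gh$ is a non-discrete Hecke pair.

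Finally, for the $\ast$-homomorphism property I would verify $\la(f^\ast)=\la(f)^\ast$, i.e. $\langle f^\ast\ast\xi,\eta\rangle=\langle\xi,f\ast\eta\rangle$ for $\xi,\eta\in L^2(\hg)$. Expanding both sides as double integrals over $\hg\times\hg$ and substituting the definition (\ref{eqn:inv-nondis}) of $f^\ast$, the identity reduces — after the measure-preserving change of variable coming from $G$-invariance of $\nu$, precisely as in the anti-homomorphism computation displayed before Definition \ref{def:heckealg-dis-nondis} — to the vanishing of the modular factors, which here is automatic because $\D=\D_G\equiv 1$ by unimodularity of $G$. Thus $f^\ast(Hx)=\overline{f(Hx\inv)}$ and the adjoint identity is the same bookkeeping with $\overline{\phantom{x}}$ in place of the values, completing the proof.
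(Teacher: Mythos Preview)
Your approach differs from the paper's: the paper bounds $\|\la(f)\|$ via Minkowski's inequality for integrals together with the pointwise estimate $|f|\le M_f:=\max|f|$, obtaining $\|\la(f)\|\le M_f\,\nu(\hg)$ directly from the finiteness of $\nu(\hg)$; you aim for the sharper $\|\la(f)\|\le\|f\|_1$ via the standard Cauchy--Schwarz split. Your route is more informative but has a genuine gap. The ``lifting to $G$'' you invoke to justify $\int_\hg|f(Hxy\inv)|\,d\nu(Hy)=\|f\|_1$ is precisely the device used before Theorem~\ref{thm:Hcompact}, and it requires $H$ compact: otherwise the lift $\tilde f(y)=f(Hy)$ has $H$-saturated (hence non-compact) support and lies in no $L^p(G)$, so neither Weil's formula nor the invariance of $\mu$ can be applied to it. The identity you want is nevertheless true and can be salvaged without lifting: the map $x\mapsto\int_\hg|f(Hxy\inv)|\,d\nu(Hy)$ is constant by right $G$-invariance of $\nu$, and integrating this constant over $\hg$ with Tonelli --- using that in the \emph{other} order $\int_\hg|f(Hxy\inv)|\,d\nu(Hx)=\|f\|_1$ is a genuine right translation --- forces the constant to equal $\|f\|_1$. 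So the step is reparable, but only by exploiting $\nu(\hg)<\infty$, not by passing to $G$.

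Your claim that Condition~(b) of Definition~\ref{def:heckepair-allcases}(i) is immediate because constants lie in $\H\gh$ is also wrong in the stated generality: nonzero constants belong to $C_c(\hg)$ only when $\hg$ is compact, whereas the theorem covers the finite-volume case (e.g.\ $H=SL_2(\z)$ in $G=SL_2(\r)$). The paper explicitly remarks at the end of the section that Condition~(b) is unsettled in that generality; the theorem asserts only that $\la$ is a well-defined $\ast$-homomorphism, and the Hecke-pair conclusion is drawn separately in Corollary~\ref{cor:Hcocompact} under the cocompactness hypothesis, where your constant-function observation does apply.
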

\begin{proof}
For a given $f\in \H\gh$, we set $M_f:=\max\{ |f(Hx)| ; Hx\in \hg\}$. Then for every $\xi\in L^2(\hg)$, by applying Minkowski's inequality for integrals (Theorem 6.19 of \cite{folland-ra}), we obtain
\begin{eqnarray*}
\|f\ast \xi\|_2&=& \left(\int_{\hg} \left|\int_{\hg} f(Hxy\inv) \xi(Hy)d\nu(Hy)\right|^2 d\nu(Hx)\right)^{1/2}\\
&\leq& \int_{\hg} \left(\int_{\hg} |f(Hxy\inv)|^2 |\xi(Hy)|^2 d\nu(Hy)\right)^{1/2}d\nu(Hx)\\
&\leq& M_f \int_{\hg} \|\xi\|_2 d\nu(Hx)\\
&=& M_f \nu(\hg)\|\xi\|_2.
\end{eqnarray*}
This shows that $f\ast \xi\in L^2(\hg)$. A straightforward computation shows that $\la$ is a homomorphism. Finally, since $G$ is unimodular, $\la$ preserves the involution, by Remark \ref{rem:star-hom}.
\end{proof}
The following corollary follows from the above theorem and Remark \ref{rem:condition-b}(iii):
\begin{corollary}
\label{cor:Hcocompact}
If $H$ is a unimodular cocompact subgroup of a locally compact group $G$, then the pair $\gh$ is a Hecke pair.
\end{corollary}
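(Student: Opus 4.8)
The plan is to deduce Corollary \ref{cor:Hcocompact} directly from Theorem \ref{thm:cocompact} together with Remark \ref{rem:condition-b}(iii). We must verify that the pair $\gh$ with $H$ unimodular and cocompact in $G$ satisfies the standing hypotheses of Section \ref{sec:Hcocompact}, namely that $\hg$ carries a finite relatively invariant measure $\nu$. First I would invoke Remark \ref{rem:unimod}(ii): since $H$ is unimodular there exists a relatively invariant measure on $\hg$, unique up to a positive scalar, and since $H$ is cocompact the homogeneous space $\hg$ is compact, so $\nu(\hg)<\infty$. Consequently $G$ is unimodular and both Assumptions \ref{assume:r-G-inv} and \ref{assume:H-unimod} hold, exactly as noted at the start of the section.

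Next I would apply Theorem \ref{thm:cocompact} to conclude that the map $\la:\H\gh\to B(L^2(\hg))$, $\la(f)(\xi):=f\ast\xi$, is a well defined $\ast$-homomorphism; in particular $\la(f)$ is a bounded operator on $L^2(\hg)$ for every $f\in\H\gh$, which is precisely Condition (a) of Definition \ref{def:heckepair-allcases}(i). It remains to check Condition (b): for every $x\in G$ there is some $f\in\H\gh$ with $f(Hx)\neq 0$. This is supplied by Remark \ref{rem:condition-b}(iii), which records that Condition (b) holds automatically when $H$ is compact or cocompact in $G$; concretely, one takes a compact-support continuous bump function on $\hg$ nonvanishing at the point $Hx$ and averages it over the right $H$-action, which is legitimate since $H$ is compact modulo the core — or more simply, since $\hg$ is compact one may take $f$ to be a suitable constant function on a double coset. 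With both conditions verified, $\gh$ is a non-discrete Hecke pair by Definition \ref{def:heckepair-allcases}(i).

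I do not anticipate a serious obstacle here: the corollary is essentially a bookkeeping statement assembling the hypotheses of the section, Theorem \ref{thm:cocompact}, and Remark \ref{rem:condition-b}(iii). The only point requiring a word of care is the degenerate case where $\hg$ fails to be Hausdorff or the relatively invariant measure degenerates, but cocompactness of $H$ (hence compactness of $\hg$) together with the uniqueness clause of Remark \ref{rem:unimod}(ii) rules this out, so $0<\nu(\hg)<\infty$ and all the preceding machinery applies verbatim.
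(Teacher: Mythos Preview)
Your proposal is correct and matches the paper's own proof exactly: the corollary is stated to follow from Theorem \ref{thm:cocompact} together with Remark \ref{rem:condition-b}(iii), and you invoke precisely these two ingredients (with the section's opening paragraph and Remark \ref{rem:unimod}(ii) supplying the finiteness of $\nu(\hg)$). The only superfluous aside is the remark about averaging being ``legitimate since $H$ is compact modulo the core,'' which is neither needed nor quite accurate here; your alternative observation that $\hg$ is compact (so Condition (b) is immediate via Remark \ref{rem:condition-b}(iii)) is the right one.
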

We do not know whether pairs $\gh$ for which $\hg$ has a finite relatively invariant measure satisfy Condition (b) of Definition \ref{def:heckepair-allcases}(i). Regrading Theorem \ref{thm:cocompact}, this condition is the only thing that potentially prevents these pairs to be Hecke pairs.


\bibliographystyle{amsalpha}

\end{document}